\newtheorem{thm}{Theorem}[section]
\newtheorem{cor}[thm]{Corollary}
\newtheorem{prop}[thm]{Proposition}
\newtheorem{lem}[thm]{Lemma}
\newtheorem{ex}{Example}[section]
\newcommand{\be}{\begin{equation}}
\newcommand{\ee}{\end{equation}}
\newcommand{\ben}{\begin{enumerate}}
\newcommand{\een}{\end{enumerate}}
\newcommand{\beq}{\begin{eqnarray}}
\newcommand{\eeq}{\end{eqnarray}}
\newcommand{\beqn}{\begin{eqnarray*}}
\newcommand{\eeqn}{\end{eqnarray*}}
\title{New Classes of Finsler Metrics:\\ The birth of new Projective Invariant}
\author{Nasrin Sadeghzadeh}
\newcommand{\acr}{\newline\indent}
\address{\llap{*\,}Department of Mathematics,\acr
University of Qom, \acr
Alghadir Bld\acr
Qom\acr
Iran}
\email{nsadeghzadeh@qom.ac.ir}
\begin{document}
\maketitle
\section{Abstract}
This paper presents a pioneering projective invariant in Finsler geometry, introducing a new class of Finsler metrics that are preserved under projective transformations. The newly formulated weakly generalized Douglas-Weyl $(W-G D W)$ equation facilitates the generalization of generalized Douglas-Weyl $(G D W)$-metrics into the broader category of $W-G D W$-metrics, which encompasses all $G D W$-metrics. Within this class, there are also two additional subclasses: generalized weakly-Weyl metrics, characterized by a milder form of Weyl curvature, and generalized $\tilde{D}$-metrics, defined by a less strict version of Douglas curvature. The paper provides a comprehensive overview of these generalized class of Finsler metrics and elucidates their properties, supported by detailed examples.
\vspace{4mm}

\textbf{2010 Mathematics Subject Classification}: { 53B40; 53C60}

\textbf{Keywords}: {Weakly generalized Douglas-Weyl metrics, Generalized Douglas-Weyl metrics, Generalized $\tilde{D}$-metrics, Generalized weakly-Weyl metrics.}
\section{Introduction}
In the intricate fabric of Finsler geometry, projective invariants act as essential threads that illuminate the deep connections between a manifold's properties and the intrinsic characteristics of its geodesics. Finsler geometry is a metric generalization of Riemannian geometry, where the general definition of the length of a vector is not necessarily given in the form of the square root of a quadratic form as in the Riemannian case. In fact, Finsler geometry is better described as Riemannian geometry without the quadratic restriction \cite{Shiing}.

The projective change between two Finsler spaces is an important concept in Finsler geometry, as it helps to understand the relationship between different Finsler metrics on the same manifold. These transformations preserve specific projective properties of the underlying geometry, revealing essential aspects of the geometry and curvatures of Finsler spaces. This concept has been studied extensively by many researchers \cite{Proj. 1}, \cite{Proj. 3}, \cite{Proj. 4} and \cite{Proj. 5}.

In the realm of projective geometry, projective invariants hold immense significance and are exemplified by well-known cases like Weyl, Douglas and the Generalized Douglas-Weyl (GDW)metrics. Douglas, Weyl and $G D W$-metrics are the fundamental quantities in projective Finsler geometry. Douglas metrics are characterized by vanishing Douglas curvature \cite{Douglas}, \cite{Bacso}, while Weyl metrics are characterized by vanishing Weyl curvature \cite{Sh2}. A Finsler metric is a Weyl metric if and only if it is of scalar flag curvature [16]. The study of projective invariants in Finsler geometry has led to the emergence of the new classes of Finsler metrics, such as the Generalized Douglas-Weyl Finsler metrics $(G D W)$-metrics \cite{GDW}. The class of Generalized Douglas-Weyl Finsler metrics $(G D W(M))$ are a class of Finsler metrics that satisfy the equation $D_{j}{^i}_{k l \mid m} y^{m}=T_{j k l} y^{i}$ for some tensor's coefficients denoted by $T_{j k l}$, where $D_{j}{^i}{ }_{k l \mid m}$ denotes the horizontal covariant derivatives of $D_{j}{^i}_{k l}$ with respect to the Berwald connection of $F$.

Projective invariants capture the essential geometric properties that remain unchanged under projective changes on a Finsler manifold $(M, F)$.

This paper aims to present a significant breakthrough in Finsler geometry by unveiling a new projective invariant. The development of projective invariants in Finsler geometry, particularly the introduction of the Weakly Generalized Douglas-Weyl $(W-G D W)$-metrics, represents a crucial step in advancing our understanding of projective Finsler manifolds and their geometric properties. The $W-G D W$ metrics, a new class of Finsler metrics, are closed under projective changes, signifying a notable expansion in the study of Finsler geometry. The class of $W-G D W$-metrics comprises a category of Finsler metrics that fulfill the following equation
$$
\tilde{D}_{j}{ }^{i}{ }_{k l \mid m} y^{m}+\lambda F \tilde{D}_{j}{ }^{i}{ }_{k l}=U_{j k l} y^{i} \text {. }
$$
Here $U_{j k l}$ are the cofficients of a suitable tensor field, and $\lambda=\lambda(x, y)$ is a scalar function defined on the tangent bundle $T M$. Moreover, $\tilde{D}_{j}{ }^{i}{ }_{k l}=D_{j}{ }^{i}{ }_{k l \mid m} y^{m}$ and $D_{j}{ }^{i}{ }_{k l \mid m}$ represrnts the horizontal covariant derivatives of $D_{j}{ }^{i}{ }_{k l}$ with respect to the Berwald connection of $F$.

This new class of Finsler metrics includes all previous class of projective invariant Finsler metrics, such as Douglas, Weyl, and GDW-metrics.\\
Additionally, it contains two new subclasses of Finsler metrics: generalized weakly-Weyl and generalized $\tilde{D}$-metrics.\\
The weakly-Weyl subclass is a projective invariant class of Finsler metrics characterized by a weaker form of Weyl invariance. This subclass includes Weyl and W-quadratic Finsler metrics \cite{Dtilde-Sadegh}.\\
The generalized $\tilde{D}$-metrics constitute a class of Finsler metrics that contains Douglas metrics and is characterized by a weaker form of Douglas curvature. The Figure \ref{Figure} clarifies the hierarchical structure of these new classes of Finsler metrics and highlights their interrelations. It provides a comprehensive overview that enhances our understanding of how these new classes relate to the traditional invariant classes of Finsler metrics. The insights provided by Proposition \ref{GDtildinWGDW}, Corollary \ref{DinGDtild}, Corollary \ref{WeylnotGDtild}, Corollary \ref{WeylinGWWeyl} and Theorem \ref{GWWeylinWGDW} contribute significantly to the development of this hierarchical structure depicted in Figure \eqref{Figure}. \\
By introducing the new invariants, we aim to provide a comprehensive analysis of their properties. Additionally, we will investigate their connections to other known projective invariants and explore its application in characterizing Finsler manifolds.\\
Throughout this paper, the notations \enquote{${}_{.}$} and \enquote{${}_{\mid}$} represent the vertical and horizontal derivatives associated with the Berwald connection of Finsler metric $F$, respectively.\\
Additionally, the subscript \enquote{${}_0$} denotes the contraction by $y^{m}$ indicated by the subscript $m$
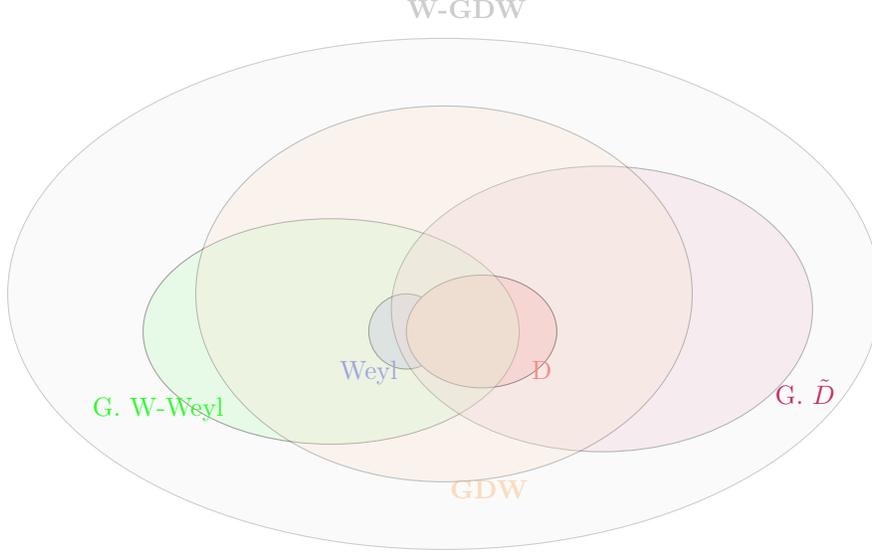
\begin{figure}
\begin{tikzpicture}
   \draw[fill=blue!20, opacity=8] (0,0) circle (0.5); \node[below=-0.04cm] at (-0.5, -0.3) {\textcolor{blue}{Weyl}};
  \draw[fill=red!20, opacity=2] (1,0) ellipse (1 and 0.75);  \node[below=0.02cm] at (1.8, -0.25){\textcolor{red}D};
  \draw[fill=green!20, opacity=0.45] (-1,0) ellipse (2.5 and 1.5); \node[below=0.05cm] at (-3.3, -0.7){\textcolor{green}{G. W-Weyl}};
  \draw[fill=purple!20, opacity=0.35] (2.6,0.3) ellipse (2.8 and 1.9); \node[below=0.01cm] at (5.3, -0.5){\textcolor{purple}{G. $\tilde{D}$}};
  \draw[fill=orange!20, opacity=0.3] (0.5,0.5) ellipse (3.3 and 2.5) node [below=1 ex] at (1.1 , -1.7){\textbf{\textcolor{orange}{GDW}}};
  \draw[fill=gray!20, opacity=0.2] (0.5,0.5) ellipse (5.8 and 3.4) node[above=1 ex]at (0.8 , 3.9) {\textbf{\textcolor{black}{W-GDW}}};
\end{tikzpicture}
\caption{\small The diagram illustrates the relationships between different classes of Finsler metrics. The blue circle represents the class of Weyl metrics, denoted as Weyl. The red circle represents the class of Douglas metrics, denoted as D. The green ellipse encompasses the class of generalized weakly-Weyl metrics, denoted as G. W-Weyl. The purple ellipse represents the class of generalized $\tilde{D}$-metrics, denoted as G. $\tilde{D}$. The orange ellipse denotes the class of generalized Douglas-Weyl metrics, denoted as GDW. The gray ellipse represents the class of weakly generalized Douglas-Weyl metrics, denoted as W-GDW.}
    \label{Figure}
\end{figure}
\section{Preliminaries}
A Finsler metric is defined on a manifold $M$ as a non-negative function $F$ on $TM$ that satisfies the following properties.
\ben
\item[(a)] $F$ is
 $C^{\infty}$ on $TM\setminus \{0\}$,
\item[(b)]
$F(\lambda y) =\lambda F(y)$, $\forall \lambda >0$, $\ y\in TM$,
\item[(c)] For each $y\in T_xM$,
the following quadratic form ${\bf g}_y$ on $T_xM$ is positive definite,
\be
    {\bf g}_y(u, v)= {1\over 2} \left[ F^2(y + su + tv) \right]_{s, t=0}, \quad u, v \in T_xM.
\ee
\een
At every point $x\in M$, a Finsler metric $F$ satisfies the property that $F_x= F|_{T_xM}$ is an Euclidean norm if and only if ${\bf g}_y$ is independent of $y\in T_xM\setminus{0}$.
A curve $c(t)$ is called a {\it geodesic} if it satisfies
\be
{d^2 c^i\over dt^2} + 2 G^i (c(t), \dot{c}(t))=0,
\ee
where $G^i(x, y)$ are local functions on $TM$ given by
\be\label{Gi}
G^i(x, y)= \frac{1}{4} g^{il}(x, y) \{ \frac{\partial^2 F^2}{\partial x^k \partial y^l} y^k - \frac{\partial F^2}{\partial x^l}\},\quad y\in T_xM,
\ee
and called the spray coefficients of $F=F(x,y)$. Here,
\[
G=y^i\frac{\partial }{\partial x^i}-2G^i(x,y) \frac{\partial}{\partial y^i},
\]
denotes the associated spray to $(M,F)$. The projection of an integral curve of $G$ is called a geodesic in $M$.\\
The Riemann curvature $R_y = R{^i}_k \frac{\partial}{\partial x^i}\otimes dx^k$ of $F$ is given by
\[
R{^i}_k=2\frac{\partial G^i}{\partial x^k}-\frac{\partial^2 G^i}{\partial x^m \partial y^k}y^m+ 2G^m\frac{\partial^2 G^i}{\partial y^m \partial y^k}-\frac{\partial G^i}{\partial y^m}\frac{\partial G^m}{\partial y^k}.
\]
For the Riemann curvature of Finsler metric $F$ one has \cite{Sh2}
\be\label{Rikl}
R{^i}_{kl}=\frac{1}{3}(R{^i}_{k.l}-R{^i}_{l.k}), \quad and \quad R_j{^i}_{kl}=R{^i}_{kl.j}.
\ee
$F$ is called a {\it Berwald metric} if $G^i(x, y)$ are quadratic in $y\in T_xM$ for all $x\in M$.
Define
\[
B_y:T_xM\times T_xM\times T_xM\rightarrow T_xM
\]
\[
B_y(u,v,w)=B_j{^i}_{kl}u^j v^k w^l \frac{\partial}{\partial x^i},
\]
where
$
B_j{^i}_{kl}=\frac{\partial^3 G^i}{\partial y^j \partial y^k \partial y^l}$ and $u=u^i\frac{\partial}{\partial x^i}$, $v=v^i \frac{\partial}{\partial x^i}$, $w=w^i\frac{\partial}{\partial x^i}$.
The relationship between Riemann and Berwald curvature is of significant interest, as noted in \cite{Sh2}.
\be\label{RieBer}
B_j{^i}_{ml|k}-B_j{^i}_{mk|l}=R_j{^i}_{kl.m}.
\ee
Define
\[
E_y:T_xM\times T_xM \rightarrow \mathbb{R},
\]
\[
E_y(u,v)=E_{jk} u^j v^k,
\]
where $E_{jk}=\frac{1}{2}B_j{^m}_{km}$. The Berwald curvature and mean Berwald curvature are denoted by $B$ and $E$, respectively. A Finsler metric $F$ is called a Berwald and Weakly Berwald (WB) metric if $B=0$ and $E=0$, respectively \cite{Sh3}. \\
A Finsler metric $(M,F)$ has isotropic mean Berwald curvature if
\[
E_{ij}=\frac{n+1}{2}cF^{-1} h_{ij},
\]
for some scalar function $c=c(x)$ on $M$, where $h_{ij}$ is the angular metric. The $S$-curvature $S(x,y)$ is defined as follows \cite{Sh3}
\[
S(x,y)=\frac{d}{dt}[\tau\Big(\gamma(t),\gamma'(t)\Big)]_{|t=0},
\]
where $\tau(x, y)$ is the distortion of the metric $F$ and $\gamma(t)$ is the geodesic with $\gamma(0)=x$ and $\gamma'(0)=y$ on $M$. It is known that \cite{Sh2}
\be\label{ES}
E_{ij}=\frac{1}{2}S_{.i.j}.
\ee
The Finslerian quantity $H$ was introduced by H. Akbar-Zadeh to characterization of Finsler metrics of constant flag curvature which is obtained
from the mean Berwald curvature $E$ by the covariant horizontal differentiation along geodesics. For a vector $y \in T_pM$,
\[
H_y : T_pM \times T_pM \longrightarrow \mathbb{R}
\]
is given by
\[
H_y(u, v) = H_{jk}(y)u^j v^k,
\]
where $H_{jk} = E_{jk|l}y^l$. 
Define
\be\label{Douglas}
D_j{^i}_{kl}=B_j{^i}_{kl}-\frac{1}{n+1}\frac{\partial^3}{\partial y^j \partial y^k \partial y^l}(\frac{\partial G^m}{\partial y^m}y^i).
\ee
The tensor $D=D_j{^i}_{kl} dx^j\otimes \frac{\partial}{\partial x^i}\otimes dx^k \otimes dx^l$ is a well-defined tensor on the slit tangent bundle $TM_0$, and is called the Douglas tensor. The Douglas tensor $D$ is a non-Riemannian projective invariant, meaning that if two Finsler metrics $F$ and $\bar{F}$ are projectively equivalent, i.e., if $G^i=\bar{G^i}+P y^i$ where the projective factor $P=P(x,y)$ is positively $y$-homogeneous of degree one, then the Douglas tensor of $F$ is the same as that of $\bar{F}$ \cite{DShen}, \cite{Sh2}.
One could easily show that
\be\label{D2}
D_j{^i}_{kl}=B_j{^i}_{kl}-\frac{2}{n+1}\{E_{jk}\delta^i_l+E_{jl}\delta^i_k+E_{kl}\delta^i_j+E_{jk.l}y^i\}.
\ee
The Douglas curvature, denoted by $D_j{^i}_{kl}$, is a projective invariant that is constructed from the Berwald curvature. Finsler metrics with $D_j{^i}_{kl}=0$ are called Douglas metrics. Additionally, metrics satisfying the following condition are called $GDW$-metrics, which are also projective invariants.
\[
{D_j}^i{}_{kl|m}y^m=T_{jkl}y^i,
\]
for some tensor's coefficients denoted by $T_{jkl}$, where ${D_j}^i{}_{kl|m}$ denotes the horizontal derivatives of ${D_j}{^i}_{kl}$ with respect to the Berwald connection of $F$.\\
Z. Shen proposed a non-Riemannian quantity $\tilde{B}$, derived from the Berwald curvature $B$, through covariant horizontal differentiation along Finslerian geodesics \cite{Sh2}. Extending the concept further, we define a metric based on the expanded notion of Douglas curvature, termed $\tilde{D}$-metric. Given a vector $y \in T_pM$, define
\[
\tilde{D}_y : T_pM \times T_pM \times T_pM \longrightarrow T_pM
\]
\[
\tilde{D}_y(u, v, w)= \tilde{D}_j{^i}_{kl} u^j v^k w^l \frac{\partial}{\partial x^i},
\]
where $\tilde{D}_j{^i}_{kl}={D_j}{^i}_{kl|0}=D_j{^i}_{kl|m}y^m$. For a vector $y \in T_pM$, we define \cite{Dtilde-Sadegh}
\[
\mathfrak{D}_y : T_pM \times T_pM \times T_pM \times T_pM\longrightarrow T_pM
\]
\[
\mathfrak{D}_y(u, v, w, z)= \mathfrak{D}_j{^i}_{klm} u^j v^k w^l z^m \frac{\partial}{\partial x^i},
\]
where $\mathfrak{D}_j{^i}_{klm}=2(\tilde{D}_j{^i}_{kl|m}-\tilde{D}_j{^i}_{km|l})$. A Finsler metric $(M, F)$ is called Stretch Douglas if $\mathfrak{D}_j{^i}_{klm}=0$. Additionally, if the metric satisfies the extra requirement below, it becomes an isotropic stretch Douglas metric
\[
\mathfrak{D}_j{^i}_{klm}=\lambda (D_j{^i}_{kl|m}-D_j{^i}_{km|l}),
\]
where $\lambda=\lambda(x,y)$ is scalar function on $TM$.
\begin{lem} \cite{Sh2}
Let $F$ and $\bar{F}$ be two projectively equivalent Finsler metrics on $M$. The Riemann curvatures are related by
\be\label{Rieproj}
\bar{R}{^i}_k=R{^i}_k+E\delta{^i}_k+\tau_k y^i,
\ee
where
\[
E=P^2-P_{|m}y^m, \quad \quad \tau_k=3(P_{|k}-PP_{.k})+E_{.k}.
\]
Here $P_{|k}$ denotes the covariant derivative of projective factor $P$ with respect to $\bar{F}$.\\
\end{lem}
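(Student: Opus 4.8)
The plan is to derive the formula directly from the definition of the Riemann curvature together with the projective relation $G^i=\bar{G}^i+Py^i$. First I would substitute this relation into the defining expression
\[
R{^i}_k=2\frac{\partial G^i}{\partial x^k}-\frac{\partial^2 G^i}{\partial x^m\partial y^k}y^m+2G^m\frac{\partial^2 G^i}{\partial y^m\partial y^k}-\frac{\partial G^i}{\partial y^m}\frac{\partial G^m}{\partial y^k},
\]
and expand each of the four summands, keeping the pure-$\bar{G}$ parts (which reassemble $\bar{R}{^i}_k$) separate from the corrections produced by $P$. Throughout, the computation is organized by the homogeneity (Euler) relations: since $P$ is positively homogeneous of degree one, $P_{.m}y^m=P$ and $P_{.m.k}y^m=0$, while $\bar{G}^i$ being homogeneous of degree two gives $\bar{G}^i_{.m}y^m=2\bar{G}^i$ and $\bar{G}^i_{.k.m}y^m=\bar{G}^i_{.k}$. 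These identities keep the homogeneity degrees consistent ($R{^i}_k$ of degree two, $E$ of degree two, $\tau_k$ of degree one).

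Next I would collect the $P$-corrections according to their tensorial type. The terms proportional to $\bar{G}^i$ coming from the third and fourth summands cancel against one another, leaving only contributions proportional to $\delta{^i}_k$ and to $y^i$. To put these in invariant form I would rewrite every remaining $\partial/\partial x$-derivative as an $\bar{F}$-horizontal covariant derivative through $P_{|k}=\frac{\partial P}{\partial x^k}-\bar{G}^m_{.k}P_{.m}$; contracting with $y^k$ and using $\bar{G}^m_{.k}y^k=2\bar{G}^m$ gives $P_{|0}=\frac{\partial P}{\partial x^m}y^m-2\bar{G}^mP_{.m}$. With this substitution the coefficient of $\delta{^i}_k$ collapses exactly to $P^2-P_{|0}=E$.

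It remains to identify the coefficient of $y^i$ with $\tau_k$. The plan here is to compute $E_{.k}$ by differentiating $E=P^2-P_{|0}$ vertically, using the expression for $P_{|0}$ above; this produces precisely the mixed terms $\frac{\partial^2 P}{\partial x^m\partial y^k}y^m$ and $\bar{G}^mP_{.m.k}$ that survive in the $y^i$-coefficient, together with multiples of $PP_{.k}$ and of $P_{|k}$. Matching the two expressions, and using once more $P_{|k}=\frac{\partial P}{\partial x^k}-\bar{G}^m_{.k}P_{.m}$ to eliminate $\bar{G}^m_{.k}P_{.m}$, the $y^i$-coefficient assembles into $3(P_{|k}-PP_{.k})+E_{.k}=\tau_k$, which completes the derivation.

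I expect the main obstacle to be this final matching step: the surviving $y^i$-corrections contain the second mixed derivative $\frac{\partial^2 P}{\partial x^m\partial y^k}y^m$ and the connection-weighted term $\bar{G}^mP_{.m.k}$, and one must recognize that these are exactly the pieces generated by the vertical derivative $(P_{|0})_{.k}$ of the scalar $E$, so that the non-covariant quantities recombine into the clean split $3(P_{|k}-PP_{.k})+E_{.k}$. Careful bookkeeping of the $PP_{.k}$ terms, which appear with different coefficients in the third and fourth summands, is needed to obtain the factor $3$; the overall signs then follow the convention fixed by the stated relation $G^i=\bar{G}^i+Py^i$ and the choice of the Berwald connection of $\bar{F}$ for the covariant derivative.
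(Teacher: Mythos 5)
The paper does not prove this lemma: it is imported verbatim from Shen's book \cite{Sh2}, so there is no in-paper argument to compare yours against. Your direct computation is the standard derivation and it does close. I checked the two steps you single out as critical: after substituting the projective relation into the four-term formula for $R{^i}_k$ and applying the Euler identities $P_{.m}y^m=P$, $P_{.m.k}y^m=0$, $\bar{G}^i_{.m}y^m=2\bar{G}^i$, $\bar{G}^i_{.m.k}y^m=\bar{G}^i_{.k}$, the contributions proportional to $\bar{G}^i$ and to $\bar{G}^i_{.k}$ cancel between the third and fourth summands; the $\delta{^i}_k$-coefficient collapses to $P^2-\bigl(\partial_m P\,y^m-2\bar{G}^mP_{.m}\bigr)=P^2-P_{|0}$; and the surviving $y^i$-coefficient, namely $2\partial_kP-\bar{G}^m_{.k}P_{.m}-PP_{.k}-\partial_m P_{.k}\,y^m+2\bar{G}^mP_{.m.k}$, is term-by-term identical to $3(P_{|k}-PP_{.k})+E_{.k}$ once $E_{.k}$ is expanded via $(P_{|0})_{.k}$, exactly as you anticipate — including the factor $3$ arising from one $PP_{.k}$ out of the third summand against three out of the fourth. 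The only caveat is the one you already flag: with the paper's stated convention $G^i=\bar{G}^i+Py^i$ and $P_{|k}$ taken with respect to $\bar{F}$, the computation actually yields $R{^i}_k=\bar{R}{^i}_k+E\delta{^i}_k+\tau_ky^i$, i.e.\ the barred and unbarred curvatures appear interchanged relative to the lemma as printed (the paper itself uses the opposite convention $\bar{G}^i=G^i+Py^i$ in its first theorem, so this is an inconsistency of the source, not of your argument). Spelling out that resolution explicitly would make the write-up airtight, but the mathematical content of your plan is sound.
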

Now, consider a Riemannian metric denoted by $\alpha$ and a 1 -form represented by $\beta$ on a manifold $M$. Additionally, let there be a smooth function $\varphi=\varphi(s)$ defined on the interval $\left(-b_{0}, b_{0}\right)$, where $b_{0}$ is given by $b_{0}=\sup _{x \in M}\|\beta\|_{x}$. Using these elements, we can define a function on the tangent bundle $T M$ as follows,
\[
F=\alpha \varphi(s), \quad \text { where }\quad s=\frac{\beta}{\alpha}.
\]
Provided that the function $\varphi$ and the value $b_{0}$ satisfy certain conditions, denoted as \eqref{12} and \eqref{13}, we can conclude that $F$ constitutes a Finsler metric on the manifold $M$. Metrics of this form are referred to as $(\alpha, \beta)$-metrics. It is noteworthy that Randers metrics represent a special subclass of these $(\alpha, \beta)$-metrics. Now, turning our attention to the specifics of $(\alpha, \beta)$-metrics, let us define $\alpha$ as $\alpha(x,y)=\sqrt{a_{i j}(x) y^{i} y^{j}}$, which serves as the Riemannian metric, while $\beta$ is expressed as $\beta(x,y)=b_{i}(x) y^{i}$, denoting the 1 -form on the manifold $M$. Let
\[
b= \|\beta\|_{x}=\sqrt{a^{i j}(x) b_{i}(x) b_{j}(x)}.
\]
By a direct computation, we obtain

$$
g_{i j}=\rho a_{i j}+\rho_{0} b_{i} b_{j}-\rho_{1}\left(b_{i} \alpha_{. j}+b_{j} \alpha_{. i}\right)+s \rho_{1} \alpha_{. i} \alpha_{. j},
$$

where $\alpha_{. i}=\frac{a_{i j} y^{j}}{\alpha}$, and
\begin{equation}
\rho=\varphi\left(\varphi-s \varphi^{\prime}\right), \quad \rho_{0}=\varphi \varphi^{\prime \prime}+\varphi^{\prime}\varphi^{\prime}, \quad \rho_{1}=s\left(\varphi \varphi^{\prime \prime}+\varphi^{\prime} \varphi^{\prime}\right)-\varphi \varphi^{\prime}. \label{11}
\end{equation}

By further computation, one obtains
\[
\operatorname{det}\left(g_{i j}\right)=\varphi^{n+1}\left(\varphi-s \varphi^{\prime}\right)^{n-2}\left[\left(\varphi-s \varphi^{\prime}\right)+\left(b^{2}-s^{2}\right) \varphi^{\prime \prime}\right] \operatorname{det}\left(a_{i j}\right).
\]
Using the continuity, one can easily show that
\begin{lem}\cite{Sh2}
$F=\alpha \varphi(s)$, with $s=\frac{\beta}{\alpha}$, is a Finsler metric on $M$ for any pair $(\alpha, \beta)$ with $\|\beta\|_{x}=b <b_{0}$, with $b_{0}>0$ if and only if $\varphi=\varphi(s)$ satisfies the following conditions
\be\label{12}
\varphi(s)>0,\quad \left(|s| \leq b_{0}\right),
\ee
\be\label{13}
\varphi(s)-s \varphi^{\prime}(s)+\left(b^{2}-s^{2}\right) \varphi^{\prime \prime}>0, \quad\left(|s| \leq b \leq b_{0}\right).
\ee
\end{lem}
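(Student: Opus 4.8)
The plan is to check, for $F=\alpha\varphi(s)$ with $s=\beta/\alpha$, the three defining axioms (a)--(c) of a Finsler metric recalled in the Preliminaries, and to show that the entire content of the lemma is concentrated in axiom (c), the positive-definiteness of ${\bf g}_y$. Axiom (b) is automatic and independent of $\varphi$: since $\alpha$ is positively $1$-homogeneous and $s$ is $0$-homogeneous in $y$, one has $F(\lambda y)=\lambda\alpha\,\varphi(s)=\lambda F(y)$ for all $\lambda>0$. For the smoothness required in (a), the Cauchy--Schwarz inequality gives $|\beta|=|b_i y^i|\le\|\beta\|_x\,\alpha=b\alpha$, so $|s|\le b<b_0$ for every $y\neq0$; thus $s$ remains in the open interval $(-b_0,b_0)$ on which $\varphi$ is $C^\infty$, so $F=\alpha\varphi(s)$ is $C^\infty$ on $TM\setminus\{0\}$, and the condition $\varphi>0$ guarantees $F>0$ off the zero section. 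Everything therefore reduces to deciding when the matrix $(g_{ij})$ displayed above is positive definite for all admissible $(\alpha,\beta,y)$.

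For axiom (c) I would diagonalise the $a$-self-adjoint operator $a^{ik}g_{kj}$. Writing $\ell_i=\alpha_{.i}=a_{ij}y^j/\alpha$, so that $a^{ij}\ell_i\ell_j=1$ and $a^{ij}b_i\ell_j=s$, the expression $g_{ij}=\rho\,a_{ij}+\rho_0 b_i b_j-\rho_1(b_i\ell_j+b_j\ell_i)+s\rho_1\ell_i\ell_j$ reduces to $\rho\,a_{ij}$ on the $a$-orthogonal complement of $\mathrm{span}\{b^{\sharp},\ell^{\sharp}\}$. Hence $\rho=\varphi(\varphi-s\varphi')$ is an eigenvalue of multiplicity at least $n-2$, and there remain two eigenvalues $\mu_1,\mu_2$ on that plane; positive-definiteness is equivalent to $\rho>0$ together with $\mu_1,\mu_2>0$. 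Since the product of all $n$ eigenvalues equals $\det(g_{ij})/\det(a_{ij})$, the determinant formula quoted above gives $\rho^{n-2}\mu_1\mu_2=\varphi^{n+1}(\varphi-s\varphi')^{n-2}\big[(\varphi-s\varphi')+(b^{2}-s^{2})\varphi''\big]$, and dividing by $\rho^{n-2}=\varphi^{n-2}(\varphi-s\varphi')^{n-2}$ isolates
\[
\mu_1\mu_2=\varphi^{3}\big[(\varphi-s\varphi')+(b^{2}-s^{2})\varphi''\big],
\]
whose bracket is precisely the left-hand side of \eqref{13}, while taking the trace $a^{ij}g_{ij}=n\rho+\rho_0 b^{2}-s\rho_1$, subtracting the contribution $(n-2)\rho$ of the first eigenspace, and substituting \eqref{11} yields
\[
\mu_1+\mu_2=\varphi\big[(\varphi-s\varphi')+(b^{2}-s^{2})\varphi''\big]+\varphi^{2}+(b^{2}-s^{2})(\varphi')^{2}.
\]

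Granting \eqref{12}--\eqref{13}, the decisive observation is that, holding $s$ fixed and letting the free parameter $b$ in \eqref{13} decrease to its smallest admissible value $b=|s|$, the term $(b^{2}-s^{2})\varphi''$ vanishes and \eqref{13} collapses to $\varphi-s\varphi'>0$; together with $\varphi>0$ this gives $\rho>0$. The same positive bracket makes $\mu_1\mu_2>0$, and since $b^{2}-s^{2}\ge0$ the displayed formula exhibits $\mu_1+\mu_2$ as a sum of positive terms, so $\mu_1+\mu_2>0$; two real numbers with positive sum and positive product are both positive, whence $\mu_1,\mu_2>0$ and $(g_{ij})$ is positive definite. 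The converse is read off the same two identities: positivity of $F=\alpha\varphi$ forces \eqref{12}, and $\mu_1\mu_2=\varphi^{3}[\cdots]>0$ with $\varphi>0$ forces the bracket, i.e. \eqref{13}, to be positive for every admissible $b$. I expect the main obstacle to be the backward (sufficiency) direction, specifically upgrading ``$\mu_1\mu_2>0$'' to ``$\mu_1,\mu_2>0$''; this is exactly what the trace computation buys us, and the boundary evaluation $b=|s|$ that produces $\varphi-s\varphi'>0$ is the other key ingredient. The degenerate configurations in which $y$ and $b^{\sharp}$ are $a$-parallel (where the plane $\mathrm{span}\{b^{\sharp},\ell^{\sharp}\}$ collapses and $|s|=b$) should be dealt with separately or recovered by a continuity argument in $b$.
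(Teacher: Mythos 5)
Your proposal is correct, but it proves the lemma by a genuinely different route than the paper. The paper (following Shen) records the determinant formula
\[
\det(g_{ij})=\varphi^{n+1}(\varphi-s\varphi')^{n-2}\bigl[(\varphi-s\varphi')+(b^{2}-s^{2})\varphi''\bigr]\det(a_{ij})
\]
and then disposes of the lemma with the one-line remark ``using the continuity'': the intended argument is a deformation (e.g.\ scaling $\beta\mapsto t\beta$, $t\in[0,1]$, or deforming $\varphi$ to a constant), under which $g_{ij}$ is positive definite at the Riemannian endpoint and the determinant never vanishes by \eqref{12}--\eqref{13}, so no eigenvalue can cross zero. You instead diagonalise $a^{ik}g_{kj}$ directly: the eigenvalue $\rho$ of multiplicity $n-2$ on the $a$-orthogonal complement of $\mathrm{span}\{b^{\sharp},\ell^{\sharp}\}$, then the product $\mu_1\mu_2=\varphi^{3}[(\varphi-s\varphi')+(b^{2}-s^{2})\varphi'']$ from the determinant and the sum $\mu_1+\mu_2=2\varphi^{2}-s\varphi\varphi'+(b^{2}-s^{2})(\varphi\varphi''+(\varphi')^{2})$ from the trace; both computations check out against \eqref{11}. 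Your argument is more work (it needs the trace identity in addition to the determinant) but is self-contained, yields the eigenvalues explicitly, and makes visible the one genuinely nontrivial deduction that the continuity argument hides, namely that specialising \eqref{13} at $b=|s|$ forces $\varphi-s\varphi'>0$ and hence $\rho>0$; combined with $\mu_1\mu_2>0$ and $\mu_1+\mu_2>0$ this closes the sufficiency direction cleanly. You also correctly flag the two residual points --- the degenerate configuration $y\parallel b^{\sharp}$ where the $2$-plane collapses, and the passage from strict inequalities at $b<b_0$ to the closed range $|s|\le b\le b_0$ in the necessity direction --- both of which are handled by the same continuity remark the paper relies on, so nothing essential is missing.
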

Let
\[
r_{i j}=\frac{1}{2}\left(b_{i\| j}+b_{j \| i}\right), \quad s_{i j}=\frac{1}{2}\left(b_{i \| j}-b {j \| i}\right),
\]
where  \enquote{$\|$} denotes the horizontal derivative with respect to $\alpha$. Additionally, we assume that
\[
r_{j}=b^{i} r_{i j}, \quad s_{j}=b^{i} s_{i j}, \quad r_{i 0}=r_{i j} y^{j}, \quad s_{i 0}=s_{i j} y^{j}, \quad r_{0}=r_{j} y^{j}, \quad s_{0}=s_{j} y^{j}.
\]
Suppose that $G^{i}$ and $\bar{G}^{i}$ denote the coefficients of $F$ and $\alpha$, respectively, in the same coordinate system. By definition, we obtain the following identity
\begin{equation}\label{GeoCoealphaBeta}
G^{i}=\bar{G}^{i}+P y^{i}+Q^{i},
\end{equation}
where
\begin{align}
& P=\alpha^{-1} \Theta\left[r_{00}-2 \alpha Q s_{0}\right], \label{GeoP}\\
& Q^{i}=\alpha Q s_{0}^{i}+\Psi\left[r_{00}-2 \alpha Q s_{0}\right] b^{i},\label{GeoQi} \\
& Q=\frac{\varphi^{\prime}}{\varphi-s \varphi^{\prime}},\label{GeoQ}\\
& \Theta=\frac{\varphi \varphi^{\prime}-s\left(\varphi \varphi^{\prime \prime}+\varphi^{\prime} \varphi^{\prime}\right)}{2 \varphi\left(\left(\varphi-s \varphi^{\prime}\right)+\left(b^{2}-s^{2}\right) \varphi^{\prime \prime}\right)},\label{GeoTheta} \\
& \Psi=\frac{1}{2} \frac{\varphi^{\prime \prime}}{\left(\varphi-s \varphi^{\prime}\right)+\left(b^{2}-s^{2}\right) \varphi^{\prime \prime}}\label{GeoPsi}.
\end{align}

\section{The birth of new projective tensor}
The birth of a new projective invariant in Finsler spaces marks a significant advancement in the field of Finsler geometry. This section presents an innovative new class of Finsler metrics that possess a unique property of being closed under projective changes. A new projective invariant equation has facilitated this development. The $W-GDW$ equation enables us to generalize $GDW$-metrics as the $W-GDW$-metrics, which constitute a broader class of Finsler metrics that includes all $GDW$-metrics. This new class of Finsler metrics is comprised of two distinct subclasses: generalized weakly-Weyl and generalized $\tilde{D}$-metrics. The generalized weakly-Weyl subclass represents another projective invariant class of Finsler metrics, characterized by a weaker form of Weyl curvature. The subsequent sub-section will provide a comprehensive overview of generalized weakly-Weyl metrics. By consulting \cite{W-Weyl}, readers can access comprehensive descriptions and properties, along with several non-trivial examples related to this class of Finsler metrics. \\
Furthermore, the generalized $\tilde{D}$-metrics are defined by a weaker form of Douglas curvature which have been introduced in the following sub-section.
\subsection{Weakly Generalized Douglas-Weyl Finsler metrics}
Within this section, a generalization of  $GDW$-metrics is introduced, referred to as $W-GDW$-metrics. These new metrics are characterized by the following equation
\[
\tilde{D}_j{^i}_{kl|0}+ \lambda F \tilde{D}_j{^i}_{kl}= U_{jkl} y^i.
\]
Here, $\lambda=\lambda(x,y)$ is a scalar function and $U_{jkl}$ represent the coefficients of a certain tensor field. Additionally, the notation $\tilde{D}_j{^i}_{kl}$ is referred to as the stretch Douglas curvature, and it is defined by $\tilde{D}_j{^i}_{kl}={D}_j{^i}_{kl\mid 0}$.
\begin{thm}
The class of $W-GDW$-metrics is closed under projective change.
\end{thm}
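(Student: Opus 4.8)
The plan is to reduce everything to the one hard fact I am allowed to use, namely that the Douglas tensor is a projective invariant ($\bar{D}_j{}^i{}_{kl}=D_j{}^i{}_{kl}$), and then to track by hand how the two derived quantities $\tilde{D}_j{}^i{}_{kl}=D_j{}^i{}_{kl|0}$ and $\tilde{D}_j{}^i{}_{kl|0}$ fail to be invariant under a projective change $G^i=\bar{G}^i+Py^i$. Throughout I let a bar denote the object or operation attached to $\bar{F}$. All non-invariance must come from the Berwald connection, whose coefficients $G^i_{jk}:=\partial^2 G^i/\partial y^j\partial y^k$ transform by
\[
G^i_{jk}=\bar{G}^i_{jk}+P_{.j.k}\,y^i+P_{.j}\,\delta^i_k+P_{.k}\,\delta^i_j,\qquad G^i_m=\bar{G}^i_m+P_{.m}y^i+P\,\delta^i_m .
\]
The goal is to show that every term created by the change is either of the admissible form $(\cdots)_{jkl}\,y^i$ (absorbable into $U_{jkl}y^i$) or a multiple of $\tilde{D}_j{}^i{}_{kl}$ itself (absorbable by redefining $\lambda$).

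First I would compute the transformation of $\tilde{D}=D_{|0}$. Writing the difference of the two Berwald derivatives of the common tensor $D_j{}^i{}_{kl}$ as a difference of connections acting on $D$, contracting with $y^m$, and repeatedly using that $D_j{}^i{}_{kl}$ is homogeneous of degree $-1$ (so $y^r D_{\,\cdot\,.r}=-D_\cdot$) together with the trace identities $D_p{}^i{}_{kl}y^p=D_j{}^i{}_{pl}y^p=D_j{}^i{}_{kp}y^p=0$, the expected outcome is the clean formula
\[
\tilde{D}_j{}^i{}_{kl}=\bar{\tilde{D}}_j{}^i{}_{kl}+V_{jkl}\,y^i,\qquad V_{jkl}:=D_j{}^p{}_{kl}\,P_{.p}.
\]
Thus $\tilde{D}$ is non-invariant only by a term $(\cdots)_{jkl}y^i$; taking $\lambda=0$ this already recovers the projective invariance of the $GDW$ class.

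The heart of the argument is the next order. Using the previous display and the same connection-difference bookkeeping, I would relate $\tilde{D}_{|0}$ to $\bar{\tilde{D}}_{|0}$. Now $\tilde{D}_j{}^i{}_{kl}$ is homogeneous of degree $0$ and still satisfies the $y$-trace identities, and $y^i$ is parallel for the Berwald connection ($\nabla_m y^i=0$), so that $y^m\nabla_m(V_{jkl}y^i)=V_{jkl|0}\,y^i$ is again of the form $(\cdots)_{jkl}y^i$. The anticipated identity is
\[
\tilde{D}_j{}^i{}_{kl|0}=\bar{\tilde{D}}_j{}^i{}_{kl|0}-2P\,\bar{\tilde{D}}_j{}^i{}_{kl}+\bigl(\bar{\tilde{D}}_j{}^p{}_{kl}\,P_{.p}+V_{jkl|0}\bigr)y^i .
\]
The term $-2P\,\bar{\tilde{D}}_j{}^i{}_{kl}$ is the genuine obstruction: it is neither invariant nor of the form $(\cdots)y^i$, which is exactly why the plain $GDW$ equation cannot absorb it.

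Finally I would assemble the $W\text{-}GDW$ combination for $F$. Substituting the two transformation laws into $\tilde{D}_{|0}+\lambda F\tilde{D}$, using $\tilde{D}=\bar{\tilde{D}}+V_{jkl}y^i$ in the second summand, and inserting the hypothesis $\bar{\tilde{D}}_j{}^i{}_{kl|0}+\bar\lambda\,\bar{F}\,\bar{\tilde{D}}_j{}^i{}_{kl}=\bar{U}_{jkl}y^i$, one is left with
\[
\tilde{D}_j{}^i{}_{kl|0}+\lambda F\,\tilde{D}_j{}^i{}_{kl}=\bigl(\lambda F-\bar\lambda\,\bar{F}-2P\bigr)\bar{\tilde{D}}_j{}^i{}_{kl}+\bigl(\text{terms of the form }(\cdots)_{jkl}\bigr)y^i .
\]
The decisive point, and the structural reason the defining equation carries the factor $F$ rather than $\lambda$ alone, is that the choice $\lambda:=(\bar\lambda\,\bar{F}+2P)/F$ annihilates the $\bar{\tilde{D}}$ term; this is a legitimate scalar function because $\bar\lambda\,\bar{F}+2P$ and $F$ are both positively homogeneous of degree one in $y$, so $\lambda$ is homogeneous of degree zero, as required. (Had the equation read $\lambda\tilde{D}$, the needed adjustment $\bar\lambda+2P$ would mix homogeneity degrees and fail.) With this $\lambda$ the right-hand side collapses to $U_{jkl}y^i$ with $U_{jkl}:=\bar{U}_{jkl}+\bar{\tilde{D}}_j{}^p{}_{kl}P_{.p}+V_{jkl|0}+\lambda F\,V_{jkl}$, so $F$ is again $W\text{-}GDW$. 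I expect the main obstacle to be the precise homogeneity-and-trace bookkeeping needed to pin down the coefficient $-2P$ in the second transformation law, since the entire closure rests on that coefficient being a degree-one multiple of $\tilde{D}$ that the $\lambda F$ term can cancel exactly.
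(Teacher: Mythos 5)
Your proposal is correct and follows essentially the same route as the paper's proof: both exploit the projective invariance of the Douglas tensor, show that $\tilde{D}$ shifts only by a $(\cdots)_{jkl}y^i$ term while the second derivative $\tilde{D}_{|0}$ picks up the extra obstruction $-2P\,\tilde{D}$, and then absorb that obstruction by adjusting the scalar via $\lambda F \mapsto \lambda F \pm 2P$ (the paper's $\bar\lambda F=\lambda F+2P$ is your $\lambda F=\bar\lambda\bar F+2P$ with the roles of the two metrics and the sign of $P$ interchanged). Your explicit observation that the factor $F$ in $\lambda F$ is exactly what makes this absorption homogeneity-consistent is left implicit in the paper, but it is a remark on the same argument rather than a different one.
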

\begin{proof}
Assume that Finsler metrics $F$ and $\bar{F}$ be projective related with the geodesic coefficients of $G^{i}$ and $\bar{G}^{i}$, respectively. We have
\begin{equation}\label{Gi1}
\bar{G}^{i}=G^{i}+P y^{i},
\end{equation}
with projective factor $P$. After differentiating with regards to $y^{j}, y^{k}$ and $y^{l}$, consecutively, we will obtain
\begin{equation}\label{Gij}
\bar{G}^{i} \cdot{ }_{j}=G^{i}{ }_{. j}+P \delta^{i}{ }_{j}+P_{. j} y^{i},
\end{equation}
and
\begin{equation}\label{Gijk}
\bar{G}^{i}{ }_{. j . k}=G^{i}{ }_{. j . k}+P_{. k} \delta^{i}{ }_{j}+P_{. j} \delta^{i}{ }_{k}+P_{. j . k} y^{i},
\end{equation}
and
\begin{equation}\label{Gijkl}
\bar{B}_{j}{ }^{i}{ }_{k l}=B_{j}{ }^{i}{ }_{k l}+P_{. k . l} \delta^{i}{ }_{j}+P_{. j . l} \delta^{i}{ }_{k}+P_{. k . j} \delta^{i}{ }_{l}+P_{. j . k . l} y^{i}.
\end{equation}
Douglas curvature is an invariant quantity, then we denote the Douglas tensor of $F$ and $\bar{F}$, by $D_{j}{ }^{i}{ }_{k l}$. After considering \eqref{Gi1}, \eqref{Gij}, and \eqref{Gijk}, one can come to the following
$$
\begin{gathered}
D_{j}{ }^{i}{ }_{k l \| m}=D_{j}{ }^{i}{ }_{k l \mid m}-D_{j}{ }^{i}{ }_{k l . r}\left(P \delta^{r}{ }_{m}+P_{. m} y^{r}\right)-D_{r}{ }^{i}{ }_{k l}\left(P_{. m} \delta^{r}{ }_{j}+P_{. j} \delta^{r}{ }_{m}+P_{. j . m} y^{r}\right) \\
-D_{j}{ }^{i}{ }_{r l}\left(P_{. m} \delta^{r}{ }_{k}+P_{. k} \delta^{r}{ }_{m}+P_{. k . m} y^{r}\right)-D_{j}{ }^{i}{ }_{k r}\left(P_{. m} \delta^{r}{ }_{l}+P_{. l} \delta^{r}{ }_{m}+P_{. l . m} y^{r}\right) \\
+D_{j}{ }^{r}{ }_{k l}\left(P_{. r} \delta^{i}{ }_{m}+P_{. m} \delta^{i}{ }_{r}+P_{. r . m} y^{i}\right),
\end{gathered}
$$
where \enquote{$\mid\mid$} denotes the horizontal derivative with respect to $\bar{G}$ of $\bar{F}$. Then
\begin{gather}\label{D||andD|}
D_{j}{ }^{i}{ }_{k l|| m}=D_{j}{ }^{i}{ }_{k l \mid m}-P D_{j}{ }^{i}{ }_{k l . m}-P_{. m} D_{j}{ }^{i}{ }_{k l}-P_{. j} D_{m}{ }^{i}{ }_{k l}  \\
-P_{. k} D_{j}{ }^{i}{ }_{m l}-P_{. l} D_{j}{ }^{i}{ }_{k m}+P_{. r} D_{j}{ }^{r}{ }_{k l} \delta^{i}{ }_{m}+P_{. m . r} D_{j}{ }^{r}{ }_{k l} y^{i}.
\end{gather}
By contracting the equation above with $y^{m}$ one obtains
\begin{equation}\label{D||0andD|0}
D_{j}{ }^{i}{ }_{k l|| 0}-D_{j}{ }^{i}{ }_{k l \mid 0}=P_{. r} D_{j}{ }^{r}{ }_{k l} y^{i}.
\end{equation}
With the same as above procedure of calculations we find
$$
\begin{gathered}
D_{j}{ }^{i}{ }_{k l|0| \mid 0}=D_{j}{ }^{i}{ }_{k l|0| 0}-2 P y^{r} D_{j}{ }^{i}{ }_{k l \mid 0 . r}-\left(P \delta^{r}{ }_{j}+P_{. j} y^{r}\right) D_{r}{ }^{i}{ }_{k l \mid 0}\\
-\left(P \delta^{r}{ }_{k}+P_{. k} y^{r}\right) D_{j}{ }^{i}{ }_{r l \mid 0} -\left(P \delta^{r}{ }_{l}+P_{. l} y^{r}\right) D_{j}{ }^{i}{ }_{k r \mid 0}+\left(P \delta^{i}{ }_{r}+P_{. r} y^{i}\right) D_{j}{ }^{r}{ }_{k l \mid 0 .}
\end{gathered}
$$
By simplifying we have
\begin{equation}\label{D|0|0}
D_{j}{ }^{i}{ }_{k l|0| \mid 0}=D_{j}{ }^{i}{ }_{k l|0| 0}-2P D_{j}{ }^{i}{ }_{k l|0}+P_{. r} D_{j}{ }^{r}{ }_{k l \mid 0} y^{i}.
\end{equation}
On the other hands, by taking horizontal derivative \eqref{D||0andD|0} with respect to $\bar{G}$, we derive
\begin{equation}\label{D||0||0}
D_{j}{ }^{i}_{k l\|0\| 0}=D_{j}{ }^{i}_{k l|0| \mid 0}+\left(P_{. r} D_{j}{ }^{r}{ }_{k l}\right)_{\| 0} y^{i}.
\end{equation}
By using the equation \eqref{D|0|0} in the equation \eqref{D||0||0}, one obtains
$$
D_{j}{ }^{i}{ }_{k l|| 0|| 0}=D_{j}{ }^{i}{ }_{k l|0| 0}-2P D_{j}{ }^{i}{ }_{k l|0} +\left(P_{. r} D_{j}{ }^{r}{ }_{k l \mid 0}+\left(P_{. r} D_{j}{ }^{r}{ }_{k l}\right)_{|| 0}\right) y^{i} \text {. }
$$
According to the assumption, $F$ is of $W-GDW$-metric, then there is a function $\lambda$ on $T M$ and some tensor coefficients $U_{j k l}$, such that
$$
D_{j}{ }^{i}{ }_{k l|| 0|| 0}=-(\lambda F+2P) D_{j}{ }^{i}{ }_{k l \mid 0}+U_{j k l} y^{i}+\left(P_{. r} D_{j}{ }^{r}{ }_{k l \mid 0}+\left(P_{. r} D_{j}{ }^{r}{ }_{k l}\right)_{\| 0}\right) y^{i},
$$
which by \eqref{D||0andD|0}, one finds
$$
D_{j}{ }^{i}{ }_{k l \|0 \|0}+\overline{\lambda} F D_{j}{ }^{i}{ }_{k l \| 0}=\bar{U}_{j k l} y^{i}
$$
where $\bar{U}_{j k l}=U_{j k l}- P_{. r} D_{j}{ }^{r}{ }_{k l}+P_{. r} D_{j}{ }^{r}{ }_{k l \mid 0}+\left(P_{. r} D_{j}{ }^{r}{ }_{k l}\right)_{|| 0}$ and $\overline{\lambda}F=\lambda F+2P$. Putting $\tilde{\bar{D}}_{j}{ }^{i}{ }_{k l}=D_{j}{ }^{i}{ }_{k l|| 0}$, we get
\begin{equation}\label{barW-GDW}
\tilde{\bar{D}}_{j}{ }^{i}_{k l \| 0} + \lambda F \tilde{\bar{D}}_{j}{ }^{i}_{k l}=\bar{U}_{j k l} y^{i}.
\end{equation}
\end{proof}
To understand this new class of Finsler metrics, namely the $W-GDW$-metrics, it is essential to explore their connections with other classes of Finsler metrics. The class of $GDW$-metrics serves as an excellent candidate for this examination, due to it is a projective invariant. Next, we introduce the subsequent Proposition for the evaluation.
\begin{prop}
Every $GDW$-metric is a $W-GDW$-metric.
\end{prop}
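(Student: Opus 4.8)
The plan is to unwind the two definitions and observe that the $GDW$ condition already forces the stretch Douglas tensor to have the factored form $T_{jkl}y^i$, so that the $W-GDW$ equation holds with the simplest possible choice of auxiliary data. First I would recall that by definition $\tilde{D}_{j}{}^{i}{}_{kl}=D_{j}{}^{i}{}_{kl\mid 0}$, whereas the $GDW$ hypothesis reads $D_{j}{}^{i}{}_{kl\mid 0}=T_{jkl}y^i$ for some tensor coefficients $T_{jkl}$. Combining these two identities gives immediately
$$
\tilde{D}_{j}{}^{i}{}_{kl}=T_{jkl}\,y^i .
$$

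Next I would differentiate this identity horizontally and contract with $y^m$ in order to compute $\tilde{D}_{j}{}^{i}{}_{kl\mid 0}$. The key technical ingredient is that the horizontal covariant derivative of the tautological vector field $y^i$ vanishes for the Berwald connection, that is $y^i{}_{\mid m}=0$. Invoking this together with the product rule, the differentiation acts only on the tensor coefficients, and one obtains
$$
\tilde{D}_{j}{}^{i}{}_{kl\mid 0}=T_{jkl\mid 0}\,y^i .
$$

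Finally I would substitute into the defining $W-GDW$ equation with the choice $\lambda=0$, which yields
$$
\tilde{D}_{j}{}^{i}{}_{kl\mid 0}+\lambda F\,\tilde{D}_{j}{}^{i}{}_{kl}=T_{jkl\mid 0}\,y^i ,
$$
exactly of the required shape $U_{jkl}y^i$ upon setting $U_{jkl}=T_{jkl\mid 0}$. Since $U_{jkl}$ is built from the tensor $T_{jkl}$ by horizontal covariant differentiation and contraction, it is itself a legitimate set of tensor coefficients, so every ingredient demanded by the $W-GDW$ definition is genuinely produced.

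There is essentially no serious obstacle here beyond correctly invoking $y^i{}_{\mid m}=0$; the one point worth flagging is that the argument does not depend on a special tuning of $\lambda$. Indeed, one could equally retain an arbitrary scalar $\lambda=\lambda(x,y)$ and simply absorb the extra term by taking $U_{jkl}=T_{jkl\mid 0}+\lambda F\,T_{jkl}$, which is again a set of tensor coefficients. Thus the inclusion $GDW\subseteq W\!-\!GDW$ is robust, confirming the hierarchy displayed in Figure \ref{Figure}.
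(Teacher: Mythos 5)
Your argument is correct and is essentially identical to the paper's own proof: both differentiate the $GDW$ identity $D_{j}{}^{i}{}_{kl\mid 0}=T_{jkl}y^i$ horizontally along the geodesic (using $y^i{}_{\mid m}=0$) to get $\tilde{D}_{j}{}^{i}{}_{kl\mid 0}=T_{jkl\mid 0}y^i$, and then absorb everything into $U_{jkl}=T_{jkl\mid 0}+\lambda F\,T_{jkl}$ for an arbitrary scalar $\lambda$. Your closing remark that $\lambda$ needs no special tuning matches the paper's phrasing \enquote{for some functions $\lambda$ on $TM$} exactly.
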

\begin{proof}
Assume that $F$ is a $GDW$-metric. Then we have
$$
D_{j}{ }^{i}{ }_{k l \mid 0}=T_{j k l} y^{i},
$$
for some tensor's coefficients denoted by $T_{jkl}$. By taking the horizontal derivative with respect to Berwald connection, we obtain $D_{j}{ }^{i}{ }_{k l|0| 0}=T_{j k l \mid 0} y^{i}$. Then
$$
D_{j}{ }^{i}{ }_{k l|0| 0}+\lambda F D_{j}{ }^{i}{ }_{k l \mid 0}=\left(T_{j k l \mid 0}+\lambda F T_{j k l}\right) y^{i},
$$
For some functions $\lambda=\lambda(x, y)$ on $T M$. It means that $F$ is a $W-GDW$-metric.
\end{proof}
The above proposition raises the question of whether there exists a non-trivial $W-GDW$ metric meaning a $W-GDW$-metric that is not a $GDW$-metric. Exploring this question is crucial for a deeper understanding of the class of $W-GDW$-metrics and their unique properties. To investigate this further, we can analyze the defining characteristics of both classes and identify any potential distinctions. By examining specific examples, we may uncover metrics that fall under the $W-GDW$ classification but do not meet the criteria for $GDW$-metrics. In the following theorems, we examine non-trivial $W-GDW$-metrics. While we examine the potential for such metrics in the following theorems, providing explicit examples requires a more comprehensive analysis. This intricate investigation is ongoing and will be presented in a separate paper \cite{ExampleW-Weyl}.
To begin, we will establish a lemma.
\begin{lem}
Let $F=\alpha \varphi(s)$, with $s=\frac{\beta}{\alpha}$, be a regular $(\alpha, \beta)$-metric of non-Randers type, where non-closed 1 -form $\beta$ satisfies the conditions
\begin{equation}\label{rij and sj}
r_{i j}=0, \quad s_{i}=0.
\end{equation}
Then the following equations hold
\be\label{alphaQjkbjbk}
(\alpha Q)_{. j . k \mid 0} b^{j} b^{k}=0.
\ee
\be\label{alphaQjklbjbkbl}
(\alpha Q)_{. j \cdot k \cdot l \mid 0} b^{j} b^{k} b^{l}=0.
\ee
Here, $Q=\frac{\varphi^{\prime}}{\varphi-s \varphi^{\prime}}$ as stated in \eqref{GeoQ}.
\end{lem}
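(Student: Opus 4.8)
The plan is to turn the two identities into statements about fully $b$-contracted \emph{scalars}, whose horizontal derivative along $y$ is easy to control. The starting observation is that the hypotheses \eqref{rij and sj} collapse the spray decomposition \eqref{GeoCoealphaBeta}: since $r_{00}=r_{ij}y^iy^j=0$ and $s_0=s_jy^j=0$, formula \eqref{GeoP} yields $P\equiv0$, so that $G^i=\bar G^i+Q^i$ with $Q^i=\alpha Q\,s_0^i$ by \eqref{GeoQi}. Hence for any scalar $f$ on $TM$ the Berwald horizontal derivative of $F$ along $y$ reduces to $f_{|0}=f_{\|0}-Q^rf_{.r}$, where $\|$ is the Levi-Civita connection of $\alpha$. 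First I would record the contractions forced by \eqref{rij and sj}: with $y_i:=a_{ij}y^j$, antisymmetry of $s_{ij}$ gives $s_0^iy_i=0$, and $s_0^ib_i=s_0=0$, whence $Q^iy_i=Q^ib_i=Q^is_{.i}=0$. Together with $\alpha_{\|0}=0$ and $\beta_{\|0}=r_{00}=0$ (so $s_{\|0}=0$), these give the basic facts $\alpha_{|0}=0$ and $s_{|0}=0$; and since $b^2_{\|m}=2b^i(r_{im}+s_{im})=2s_m=0$, the norm $b^2$ is constant.

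Next I would compute the vertical derivatives of $h:=\alpha Q$. Using $\alpha_{.i}=y_i/\alpha$ and $s_{.i}=\tfrac1\alpha(b_i-s\,y_i/\alpha)$, direct differentiation writes $h_{.j.k}$ and $h_{.j.k.l}$ as explicit combinations of $a_{jk},\,b_jb_k,\,y_jb_k+b_jy_k,\,y_jy_k$ (and their degree-three analogues) with coefficients depending only on $s$ and $b^2$. The structural outputs I want to extract are the partial $b$-contractions
\[
h_{.j.k}\,b^j=C\,s_{.k},\qquad h_{.j.k.l}\,b^jb^k=\frac{(b^2-s^2)C'-2sC}{\alpha}\,s_{.l}-\frac{C\,(b^2-s^2)}{\alpha^3}\,y_l,
\]
where $C:=(Q-sQ')+(b^2-s^2)Q''$ and $C'=dC/ds$. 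In particular the fully contracted scalars are $h_{.j.k}b^jb^k=\tfrac1\alpha(b^2-s^2)C$ and $h_{.j.k.l}b^jb^kb^l$, each equal to a negative power of $\alpha$ times a function of $s$ and $b^2$ alone.

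The heart of the proof is a Leibniz rearrangement that shifts the horizontal derivative onto these scalars. Since $b^k=a^{ki}b_i$ depends on $x$, I would compute its Berwald horizontal derivative along $y$; using that the Berwald connection coefficients of $F$ and $\alpha$ differ by $\partial_{y^r}\partial_{y^m}Q^k$ (the $P$-contribution being absent), that $(\partial_{y^r}\partial_{y^m}Q^k)y^m=0$, and that $(b^k)_{\|0}=a^{ki}s_{i0}=s_0^k$, this gives the clean identity $(b^k)_{|0}=s_0^k$. Writing $\theta_{jk}:=h_{.j.k}$ and using that covariant differentiation commutes with contraction,
\[
\theta_{jk|0}\,b^jb^k=\bigl[\theta_{jk}\,b^jb^k\bigr]_{|0}-2\,\theta_{jk}\,b^j(b^k)_{|0}=\bigl[\theta_{jk}b^jb^k\bigr]_{|0}-2\,C\,s_{.k}\,s_0^k.
\]
The first term vanishes because $\theta_{jk}b^jb^k$ is $\alpha^{-1}$ times a function of $s$ and $b^2$ while $\alpha_{|0}=s_{|0}=0$ and $b^2$ is constant; the second vanishes because $s_{.k}s_0^k=\tfrac1\alpha\big(b^i-s\,y^i/\alpha\big)s_{i0}=0$. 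This proves \eqref{alphaQjkbjbk}. Running the identical scheme with three factors of $b^k$ and the formula above for $h_{.j.k.l}b^jb^k$ reduces $(\alpha Q)_{.j.k.l|0}b^jb^kb^l$ to multiples of $s_{.l}s_0^l$ and $y_ls_0^l$, both of which vanish, giving \eqref{alphaQjklbjbkbl}.

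The step I expect to be the main obstacle is the careful bookkeeping of horizontal derivatives of the $x$-dependent but $y$-independent objects $b^k$ and $b^2$ with respect to the Berwald connection of $F$ rather than that of $\alpha$, and checking that every correction incurred in passing from $\|$ to $|$ is proportional to one of the vanishing contractions $s_0^iy_i$, $s_0^ib_i$, or $s_{.i}s_0^i$. Arranging the computation so that these contractions surface explicitly — rather than expanding into coordinates — is what makes the cancellations transparent and keeps the argument finite.
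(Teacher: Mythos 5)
Your strategy is sound and does reach both identities, but it is organized differently from the paper's proof. Both arguments start from the same reduction $G^i=\bar G^i+\alpha Q\,s^i{}_0$, the same explicit formulas for $(\alpha Q)_{.j.k}$ and $(\alpha Q)_{.j.k.l}$, and the same basic facts $\alpha_{\mid 0}=s_{\mid 0}=0$. From there the paper keeps the $b$'s outside the derivative and instead establishes the auxiliary identities $\alpha_{.k\mid 0}b^k=0$, $s_{.j\mid 0}b^j=0$, $\alpha_{.j.k\mid 0}b^jb^k=0$ and $\alpha_{.j.k.l\mid 0}b^jb^kb^l=0$ by differentiating $F_{\mid 0}=0$, $F_{.k\mid 0}=0$ and $F_{.j.k\mid 0}=0$, then substitutes term by term. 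You contract with the $b$'s first, identify the partially contracted tensors as combinations of $s_{.k}$ and $y_k$ with coefficients depending only on $s$ and $b^2$ (your formulas for $h_{.j.k}b^j=Cs_{.k}$ and for $h_{.j.k.l}b^jb^k$ are correct), and then move the horizontal derivative onto the fully contracted scalar by Leibniz. This buys a cleaner bookkeeping — everything reduces to the vanishing contractions $s_{.k}s^k{}_0$, $y_ks^k{}_0$, $b_ks^k{}_0$ — at the cost of having to control $(b^k)_{\mid 0}$.

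That is exactly where your one real error sits: $(\partial_{y^r}\partial_{y^m}Q^k)\,y^m$ is \emph{not} zero. Since $Q^k=\alpha Q\,s^k{}_0$ is positively homogeneous of degree $2$ in $y$, Euler's theorem gives $(\partial_{y^r}\partial_{y^m}Q^k)y^m=\partial_{y^r}Q^k=(\alpha Q)_{.r}s^k{}_0+\alpha Q\,s^k{}_r$, whence
\[
(b^k)_{\mid 0}=(b^k)_{\| 0}+b^r(Q^k)_{.r}=\bigl(1+sQ+(b^2-s^2)Q'\bigr)\,s^k{}_0,
\]
using $s^k{}_rb^r=-s^k=0$ and $(\alpha Q)_{.r}b^r=sQ+(b^2-s^2)Q'$. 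So your claimed clean identity $(b^k)_{\mid 0}=s^k{}_0$ is false; fortunately the error is harmless, because $(b^k)_{\mid 0}$ is still a scalar multiple of $s^k{}_0$, and your Leibniz correction terms only involve $s^k{}_0$ contracted against $s_{.k}$ and $y_k$, both of which vanish. A second, equally harmless slip: the transition formula for scalars is $f_{\mid 0}=f_{\| 0}-2Q^rf_{.r}$, with a factor $2$, as in the paper's computation of $s_{\mid 0}$. With these two corrections your argument is complete and valid.
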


\begin{proof}
To initiate the proof, we first calculate $(\alpha Q)_{. j . k}$ and $(\alpha Q)_{\text {.j.k.l }}$, by using the following equations
\be\label{s.j.k}
\alpha s_{.j.l}= -s \alpha_{.j.l} - \alpha_{.j} s_{.l} - \alpha_{.l} s_{.j}.
\ee
and
\[
(\alpha Q)_{. j}= \alpha_{.j} Q+ Q^{\prime} (b_j-s \alpha_{.j})= (Q- s Q^{\prime}) \alpha_{.j} + b_j Q^{\prime}.
\]
Now, we compute the expressions for $(\alpha Q)_{. j . k}$ and $(\alpha Q)_{. j . k . l}$, which are given by
\begin{equation}\label{alphaQjk}
(\alpha Q)_{. j . k}=\left(Q-s Q^{\prime}\right) \alpha_{. j . k}+\alpha Q^{\prime \prime} s_{. j} s_{. k} .
\end{equation}
and
\be\label{alphaQjkl}
\begin{aligned}
&(\alpha Q)_{. j . k . l}=\left(Q-s Q^{\prime}\right) \alpha_{. j . k . l}-Q^{\prime \prime}\left(s\left[s_{. j} \alpha_{. k . l}+s_{. k} \alpha_{. j . l}+s_{. l} \alpha_{. k . j}\right]\right)\\
&\left.+\left[\alpha_{. j} s_{. k} s_{. l}+\alpha_{. k} s_{. j} s_{. l}+\alpha_{. l} s_{. k} s_{. j}\right]\right)+\alpha Q^{\prime \prime \prime} s_{. j} s_{. k} s_{. l}.
\end{aligned}
\ee
On the other hands, we observe that
$$
F_{\mid 0}=\alpha_{\mid 0} \varphi+\alpha \varphi^{\prime} s_{\mid 0}=0.
$$
By referring to \eqref{GeoCoealphaBeta} and \eqref{rij and sj}, we can find
\be\label{GcoefofLem}
G^i=\bar{G}^i + \alpha Q s{^i}_0.
\ee
Then
\begin{equation}\label{s|0=0}
s_{\mid 0}=s_{\| 0}-2 \alpha Q s^{r}{ }_{0} s_{. r}=s_{\| 0}-2 Q s^{r}{ }_{0}\left(b_{r}-s \alpha_{r}\right)=s_{\| 0}=\frac{1}{\alpha} b_{j \| k} y^{j} y^{k}=\frac{r_{00}}{\alpha}=0,
\end{equation}
where \enquote{$\mid\mid$} denotes the horizontal derivative with respect to $\alpha$. The two equations derived above lead us to conclude that
\begin{equation}\label{alpha|0}
\alpha_{\mid 0}=0.
\end{equation}
Furthermore, utilizing \eqref{s|0=0} and the conditions $F_{. k \mid 0}=0$ and $b_{k\mid 0}= b_{k\mid \mid 0}= s_{k0}$ (note that $r_{ij}=0$), we arrive at the following equation
\begin{equation}\label{Fk|0}
0=F_{. k \mid 0}=\left(\varphi^{\prime} b_{k}+\left(\varphi-s \varphi^{\prime}\right) \alpha_{. k}\right)_{\mid 0}=\varphi^{\prime} s_{k 0}+\left(\varphi-s \varphi^{\prime}\right) \alpha_{. k \mid 0}.
\end{equation}
By contracting the equation above with $b^{k}$ and taking into account \eqref{rij and sj}, we obtain
\begin{equation}\label{alphak|0bk}
\alpha_{. k \mid 0} b^{k}=0.
\end{equation}
It is important to note that, applying \eqref{GcoefofLem}, $b_{k \mid 0}$ can be expressed as
\[
b_{k \mid 0}=b_{k|| 0}-b_{r}\left((\alpha Q)_{. k} s^{r}{ }_{0}+\alpha Q s^{r}{ }_{k}\right)= b_{k|| 0}=s_{k 0}.
\]
Using the above equation, along with \eqref{rij and sj} and \eqref{s|0=0}, we derive
\begin{equation}\label{sj|0bj}
\alpha s_{.j \mid 0} b^{j}=\left(b_{j \mid 0}-s \alpha_{. j \mid 0}\right) b^{j}=0.
\end{equation}
Applying a similar procedure to the equation $F_{. j . k \mid 0}=0$, we have
\[
F_{.j.k\mid 0} b^j b^k = [(\varphi-s \varphi^{\prime}) \alpha_{.j.k\mid 0} + 2 \alpha \varphi^{\prime \prime} s_{.j\mid 0} s_{.k} + (\alpha\varphi^{\prime \prime\prime} s_{.j} s_{.k} -s \varphi^{\prime \prime}\alpha_{.j.k} ) s_{\mid 0}] b^j b^k =0.
\]
We can utilize the previous equations, \eqref{sj|0bj} and \eqref{s|0=0} to arrive at
\begin{equation}\label{alphajk|objbk}
\alpha_{. j . k \mid 0} b^{j} b^{k}=-\alpha \frac{\varphi^{\prime \prime}}{\varphi-s \varphi^{\prime}}\left(s_{. j} s_{. k}\right)_{\mid 0} b^{j} b^{k}=0.
\end{equation}
The equations derived above allow us to express
$$
\begin{gathered}
\alpha \alpha_{. j . k . l \mid 0} b^{j} b^{k} b^{l}=\left(\left(\alpha \alpha_{. j . k}\right)_{. l}-\alpha_{. l} \alpha_{. j . k}\right)_{\mid 0} b^{j} b^{k} b^{l}=\left[\left(\left(\delta_{j k}-\alpha_{. j} \alpha_{. k}\right)_{. l}\right)_{\mid 0}-\left(\alpha_{. l} \alpha_{. j . k}\right)_{\mid 0}\right] b^{j} b^{k} b^{l} \\
=-\left(\alpha_{. k} \alpha_{. j . l}+\alpha_{. j} \alpha_{. l . k}+\alpha_{. l} \alpha_{. j . k}\right)_{\mid 0} b^{j} b^{k} b^{l}=0.
\end{gathered}
$$
This simplifies to
\begin{equation}\label{alphajkl|0bjbkbl}
\alpha_{. j . k . l \mid 0} b^{j} b^{k} b^{l}=0.
\end{equation}
Next, by substituting \eqref{s|0=0} into \eqref{alphaQjk}, we obtain
$$
(\alpha Q)_{. j . k \mid 0}=\left(Q-s Q^{\prime}\right) \alpha_{. j . k \mid 0}+\alpha Q^{\prime \prime}\left(s_{. j} s_{. k}\right)_{\mid 0}.
$$
Utilizing \eqref{sj|0bj} and \eqref{alphajk|objbk} in the equation above leads us to find \eqref{alphaQjklbjbkbl}.\\
Applying a similar procedure to (29), we arrive at
$$
\begin{gathered}
(\alpha Q)_{. j . k . l \mid 0} b^{j} b^{k} b^{l}=\left(Q-s Q^{\prime}\right) \alpha_{. j . k . l \mid 0} b^{j} b^{k} b^{l}-3 Q^{\prime \prime}\left(s\left(s_{. j} \alpha_{. k . l}\right)_{\mid 0}+\left(\alpha_{. j} s_{. k} s_{. l}\right)_{\mid 0}\right) b^{j} b^{k} b^{l} \\
+\alpha Q^{\prime \prime \prime}\left(s_{. j} s_{. k} s_{. l}\right)_{\mid 0} b^{j} b^{k} b^{l}
\end{gathered}
$$
and by incorporating (36), (34), and (35) into the equation, we ultimately arrive at (27).
\end{proof}
Leveraging the lemma established above, we are now prepared to prove the following theorem.
\begin{thm}
Let $F$ be a regular $(\alpha, \beta)$-metric of non-Randers type, where non-closed 1-form $\beta$ satisfies the conditions
\[
r_{i j}=0, \quad s_{i}=0.
\]
Then $F$ is a non-trivial $W-GDW$ metric if and only if
\begin{equation}\label{W-GDWalphaBetaEq}
b^{l}\left(s^{m}{ }_{l|0| 0}+\lambda F s^{m}{ }_{l \mid 0}\right) y_{m}=0\text {, }
\end{equation}
for some scalar function $\lambda=\lambda(x, y)$ defined on $T M$.
\end{thm}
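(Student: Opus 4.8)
The plan is to push everything through the simplified spray available under the hypotheses. Because $r_{ij}=0$ and $s_i=0$, equation \eqref{GcoefofLem} gives $G^i=\bar G^i+\alpha Q\,s^i{}_0$ with $\bar G^i$ the quadratic spray of $\alpha$ and $Q$ as in \eqref{GeoQ}. Since third vertical derivatives annihilate the quadratic part $\bar G^i$, the whole Berwald curvature comes from the deformation, and using that $s^i{}_0=s^i{}_m y^m$ is linear in $y$,
\be
B_j{}^i{}_{kl}=(\alpha Q\,s^i{}_0)_{.j.k.l}=(\alpha Q)_{.j.k.l}\,s^i{}_0+(\alpha Q)_{.j.k}\,s^i{}_l+(\alpha Q)_{.j.l}\,s^i{}_k+(\alpha Q)_{.k.l}\,s^i{}_j .
\ee
From here I would contract to get $E_{jk}=\tfrac12 B_j{}^m{}_{km}$ (dropping the term $s^m{}_m=0$ by antisymmetry) and then assemble the Douglas tensor through \eqref{D2}.

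The structural engine of the argument is the way the free index $i$ appears. In $D_j{}^i{}_{kl}$ it is carried either by $y^i$ (via the $s^i{}_0$ factor and the $E_{jk.l}y^i$ term in \eqref{D2}) or by a transverse object $s^i{}_j,s^i{}_k,s^i{}_l$ or one of the Kronecker deltas attached to an $E$-term. Because the $W$-$GDW$ equation requires $\tilde D_j{}^i{}_{kl|0}+\lambda F\tilde D_j{}^i{}_{kl}$ to be a pure multiple of $y^i$, all the informational content sits in the vanishing of the transverse part. I would therefore organise the computation by splitting $\tilde D_j{}^i{}_{kl}=D_j{}^i{}_{kl\mid 0}$, and then its further horizontal derivative, into a \enquote{$y^i$-part} (which is harmless and is absorbed into $U_{jkl}$) and a \enquote{transverse part}, carrying only the latter.

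The horizontal differentiation along $y$ is where the preceding lemma does the work. The identities $\alpha_{\mid 0}=0$ of \eqref{alpha|0} and $s_{\mid 0}=0$ of \eqref{s|0=0}, together with the two vanishing contractions \eqref{alphaQjkbjbk} and \eqref{alphaQjklbjbkbl}, kill every $\mid 0$-derivative of an $\alpha Q$-factor once it is contracted against the $b$'s that naturally arise (the scalars $Q,Q',Q'',\dots$ depend only on $s$, and $s_{\mid 0}=0$, so their horizontal derivatives drop out too). After these cancellations the transverse part of $\tilde D_j{}^i{}_{kl|0}+\lambda F\tilde D_j{}^i{}_{kl}$ reduces to a fixed transverse tensor multiplied by the combination $s^m{}_{l|0|0}+\lambda F\,s^m{}_{l\mid 0}$. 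Demanding that this be absorbable into a $y^i$-term forces, after contraction with $y_m$, the scalar relation $b^l\bigl(s^m{}_{l|0|0}+\lambda F\,s^m{}_{l\mid 0}\bigr)y_m=0$, which is exactly \eqref{W-GDWalphaBetaEq}; conversely, when \eqref{W-GDWalphaBetaEq} holds the transverse part vanishes and the leftover $y^i$-terms define $U_{jkl}$, giving the $W$-$GDW$ property. Non-triviality (that $F$ is not merely $GDW$) follows because $\beta$ is non-closed, so $s_{ij}\neq 0$, and $F$ is of non-Randers type, so $\tilde D_j{}^i{}_{kl}$ genuinely retains a transverse component and cannot be written as $T_{jkl}y^i$.

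The step I expect to be the main obstacle is the precise isolation of the transverse tensor after the two horizontal derivatives: $D$, its stretch $\tilde D$, and $\tilde D_{\mid 0}$ each spawn many terms mixing $(\alpha Q)$-derivatives with $s^i{}_j$ and $s_{ij\mid 0}$, and one must check that, after invoking \eqref{alphaQjkbjbk}--\eqref{alphaQjklbjbkbl}, every transverse contribution except the claimed $b^l(s^m{}_{l|0|0}+\lambda F s^m{}_{l\mid 0})y_m$ cancels. The delicate point is matching the $\lambda F$-term to the single (rather than double) horizontal derivative of $s^m{}_l$: this is forced by the homogeneity-zero character of $Q$ and the degrees of the surviving curvature factors, and it is precisely the feature that distinguishes the $W$-$GDW$ equation from the $GDW$ equation.
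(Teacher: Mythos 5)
Your overall strategy coincides with the paper's: reduce to the spray $G^i=\bar G^i+\alpha Q\,s^i{}_0$, read off $B_j{}^i{}_{kl}$ from the deformation term, identify $D$ with $B$, and use the lemma's vanishing contractions to kill the horizontal derivatives of the $(\alpha Q)$-factors after contracting with $b^jb^kb^l$ and projecting along $y$. However, two steps that you treat as routine are genuine gaps. First, non-triviality: you assert that $\tilde D_j{}^i{}_{kl}$ ``retains a transverse component'' because $\beta$ is non-closed and $F$ is non-Randers, hence $F$ is not a $GDW$-metric. That does not follow from the formula for $B_j{}^i{}_{kl}$ alone, since the $GDW$ condition concerns $D_j{}^i{}_{kl\mid 0}$, whose transverse part involves $s^i{}_{l\mid 0}$ and $(\alpha Q)_{.j.k\mid 0}$, none of which you show to be nonzero modulo $y^i$. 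The paper closes this by noting that $r_{ij}=0$, $s_i=0$ force $S=0$ (which, via \eqref{ES} and \eqref{D2}, is also what legitimizes $D=B$; your own sketch of $E_{jk}$ by ``dropping $s^m{}_m$'' is incomplete, as the remaining terms $(\alpha Q)_{.j.m}s^m{}_k$ do not vanish individually) and then invoking the theorem of \cite{GDWalphaBeta} that a $GDW$-metric with vanishing $S$-curvature must be Berwald; since \eqref{Berwald} shows $F$ is not Berwald, $F$ cannot be $GDW$.

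Second, in the ``only if'' direction the surviving transverse information is not bare: after the contractions it appears multiplied by the scalar $(\alpha Q)_{.j.k}b^jb^k=\frac{b^2-s^2}{\alpha}\left(Q-sQ'+(b^2-s^2)Q''\right)$. If this factor vanished identically, the $W$-$GDW$ condition would impose nothing on $s^m{}_{l|0|0}+\lambda F s^m{}_{l\mid 0}$ and \eqref{W-GDWalphaBetaEq} would not be forced. The paper must, and does, dispose of this alternative by solving the ODE $Q-sQ'+(b^2-s^2)Q''=0$, integrating to an explicit $\varphi$, and showing the resulting metric is singular, contradicting regularity. Your proposal never confronts this branch, so as written the equivalence is not established; the rest of your outline (the split into a $y^i$-part absorbed into $U_{jkl}$ and a transverse part, and the use of \eqref{alphaQjkbjbk}--\eqref{alphaQjklbjbkbl}) matches the paper's computation and would go through once these two points are supplied.
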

\begin{proof}
Consider the regular $(\alpha, \beta)$-metric given by
$$
F=\alpha \varphi\left(\frac{\beta}{\alpha}\right),
$$
which is of non-Randers type (i.e., $\varphi \neq c_{1} \sqrt{1+c_{2} s^{2}}+c_{3} s$ for any constants $c_{1}>0, c_{2}$, and $c_{3}$ ). Under the conditions $r_{i j}=0$ and $s_{i}=0$, this metric exhibits vanishing $S$-curvature \cite{IsoS-cur}. Furthermore, since $\beta$ is not closed, we can refer to \eqref{GeoCoealphaBeta} to express
$$
G^{i}=\bar{G}^{i}+\alpha Q s_{0}^{i}.
$$
Consequently, we have the expression for the Berwald curvature
\begin{equation}\label{Berwald}
B_{j}{ }^{i}{ }_{k l}=(\alpha Q)_{. j . k} s^{i}{ }_{l}+(\alpha Q)_{. j . l} s^{i}{ }_{k}+(\alpha Q)_{. l . k} s^{i}{ }_{j}+(\alpha Q)_{. j . k . l} s^{i}{ }_{0},
\end{equation}
which indicates that $F$ is not a Berwald metric.\\
On the other hand, according to the main theorem in \cite{GDWalphaBeta}, which states that a Finsler metric $F$ is a $GDW$-metric with vanishing $S$-curvature if and only if it is a Berwald metric, we conclude that the $(\alpha, \beta)$-metric $F$ is not a $GDW$-metric. This conclusion arises from the fact that $F$ is not a Berwald metric while has vanishing $S$-curvature.\\
Now, we will consider the conditions under which this metric may be classified as a $W-G D W$ metric.\\
$F$ has vanishing $S$-curvature then noting to \eqref{D2}, $B_{j}{ }^{i}{ }_{k l}=D_{j}{ }^{i}{ }_{k l}$. Then according to \eqref{Berwald}, $F$ is $W-GDW$-metric if and only if the following equation satisfies for some tensor's coefficients denoted by $U_{jkl}$ and scalar function $\lambda=\lambda(x, y)$ on $T M$,
\[
D_j{^i}_{kl\mid 0\mid 0} + \lambda F D_j{^i}_{kl\mid 0} = B_j{^i}_{kl\mid 0\mid 0} + \lambda F B_j{^i}_{kl\mid 0}=U_{jkl} y^i,
\]
where
\be\label{Ujklyi}
\begin{aligned}
& U_{j k l} y^{i}=A_{j k 0} s^{i}{ }_{l}+A_{l k 0} s^{i}{ }_{j}+A_{j l 0} s^{i}{ }_{k}+A_{j k l 0} s^{i}{ }_{0}+A_{j k} s^{i}{ }_{l \mid 0}+A_{l k} s^{i}{ }_{j \mid 0}+A_{j l} s^{i}{ }_{k \mid 0}+A_{j k l} s^{i}{ }_{0 \mid 0} \\
& (\alpha Q)_{. j . k} s^{i}{ }_{l|0| 0}+(\alpha Q)_{. l . k} s^{i}{ }_{j|0| 0}+(\alpha Q)_{. j . l} s^{i}{ }_{k|0| 0}+(\alpha Q)_{j . k . l} s^{i}{ }_{0|0| 0},
\end{aligned}
\ee
where
$$
\begin{gathered}
A_{j k}=2(\alpha Q)_{. j . k \mid 0}+\lambda F(\alpha Q)_{. j . k} \\
A_{j k l}=2(\alpha Q)_{. j \cdot k . l \mid 0}+\lambda F(\alpha Q)_{. j \cdot k . l} \\
A_{j k 0}=(\alpha Q)_{. j . k|0| 0}+\lambda F(\alpha Q)_{. j . k \mid 0} \\
A_{j k l 0}=(\alpha Q)_{. j . k . l|0| 0}+\lambda F(\alpha Q)_{. j . k . l \mid 0}
\end{gathered}
$$
By contracting the equations presented above with respect to $b^{j}, b^{k}$, and $b^{l}$, and utilizing the previous lemma, we arrive at the following results
\begin{equation}\label{Ajkbjbk}
A_{j k} b^{j} b^{k}=\lambda F(\alpha Q)_{. j \cdot k} b^{j} b^{k}, \quad A_{j k l} b^{j} b^{k} b^{l}=\lambda F(\alpha Q)_{. j . k \cdot l} b^{j} b^{k} b^{l}.
\end{equation}
Next, by applying these results and considering the assumption that $s_{j}=0$ in equation \eqref{Ujklyi}, we can conclude that
\be\label{Ujklbjbkblyi}
\begin{aligned}
&U_{j k l} b^{j} b^{k} b^{l} y^{i}=A_{j k l 0} b^{j} b^{k} b^{l} s^{i}{ }_{0}+3(\alpha Q)_{. j . k} b^{j} b^{k}\left(s^{i}{ }_{l|0| 0}+2\lambda F s^{i}{ }_{l \mid 0}\right) b^{l}\\
&+(\alpha Q)_{j . k . l} b^{j} b^{k} b^{l}\left(s^{i}{ }_{0|0| 0}+2\lambda F s^{i}{ }_{0 \mid 0}\right).
\end{aligned}
\ee
Contracting the above equation with $y_{i}=\alpha \alpha_{. i}$ yields
$$
\alpha^{2} U_{j k l} b^{j} b^{k} b^{l}=3(\alpha Q)_{. j . k} b^{j} b^{k} y_{i}\left(s^{i}{ }_{l|0| 0}+2\lambda F s^{i}{ }_{l \mid 0}\right) b^{l} .
$$
Substituting this expression back into the earlier equation, we obtain
\[
-3(\alpha Q)_{. j . k} b^{j} b^{k} b^{l}\left(s^{m}{ }_{l|0| 0}+2\lambda F s^{m}{ }_{l \mid 0}\right)^{\alpha} h^{i}{ }_{m}=A_{j k l 0} b^{j} b^{k} b^{l} s^{i}{ }_{0}
\]
\[
+(\alpha Q)_{j . k . l} b^{j} b^{k} b^{l}\left(s^{i}{ }_{0|0| 0}+2\lambda F s^{i}{ }_{0 \mid 0}\right),
\]
where ${ }^{\alpha} h^{i}{ }_{m}=\delta^{i}{ }_{m}-\frac{1}{\alpha}\alpha_{. m} y^{i}$. By contracting the above equation again with ${ }^{F} y_{i}=F F_{. i}$, and using the relation $(\alpha Q)_{. j \cdot k} b^{j} b^{k}=\frac{b^{2}-s^{2}}{\alpha}\left(Q-s Q^{\prime}+\left(b^{2}-s^{2}\right) Q^{\prime \prime}\right)$ and
\[
s{^i}_0 F F_{.i} = \varphi \varphi^{\prime} b_i s{^i}_0 + \varphi(\varphi-s \varphi^{\prime}) \alpha_{.i} s{^i}_0 =0,
\]
we arrive at
\[
\left(Q-s Q^{\prime}+\left(b^{2}-s^{2}\right) Q^{\prime \prime}\right)^{F} y_{m}\left(s^{m}{ }_{l|0| 0}+2\lambda F s^{m}{ }_{l \mid 0}\right) b^{l}=0.
\]
Solving this equation leads us to two possibilities: either $Q-s Q^{\prime}+\left(b^{2}-s^{2}\right) Q^{\prime \prime}=0$ or ${ }^{F} y_{m}\left(s{^m}_{l|0| 0}+2\lambda F s{^m}_{l \mid 0}\right) b^{l}=0$. We will demonstrate that the first possibility is untenable. If we assume $Q-s Q^{\prime}+\left(b^{2}-s^{2}\right) Q^{\prime \prime}=0$, then we can rearrange this to obtain
$$
Q^{\prime \prime}-\frac{s}{b^{2}-s^{2}} Q^{\prime}+\frac{1}{b^{2}-s^{2}} Q=0,
$$
which implies
$$
Q=k_{1} s+k_{2} \sqrt{b^{2}-s^{2}},
$$
for real constant $k_{1}$ and $k_{2}$. Given that $Q=\frac{\varphi^{\prime}}{\varphi-s \varphi^{\prime}}$, the equation simplifies to
$$
\left(1+k_{1} s^{2}+k_{2} s \sqrt{b^{2}-s^{2}}\right) \varphi^{\prime}=\left(k_{1} s+k_{2} \sqrt{b^{2}-s^{2}}\right) \varphi.
$$
Upon solving this equation, we find
$$
\varphi=c\operatorname{exp}\left[\int_{0}^{s} \frac{k_{1} t+k_{2} \sqrt{b^{2}-t^{2}}}{1+t\left(k_{1} t+k_{2} \sqrt{b^{2}-t^{2}}\right)} d t\right],
$$

where $k_{1}$ and $k_{2}$ are real constants, and $c>0$ is a non-zero constant. This indicates that $F$ is a singular Finsler metric, which contradicts our initial assumption. Consequently, we have
$$
\begin{gathered}
0=F_{.m}\left(s^{m}{ }_{l|0| 0}+2\lambda F s^{m}{ }_{l \mid 0}\right) b^{l}=\left(\varphi^{\prime} b_{m}+\left(\varphi-s \varphi^{\prime}\right) \frac{y_{m}}{\alpha}\right)\left(s{^m}{ }_{l|0| 0}+\lambda F s{^m}_{l \mid 0}\right) b^{l} \\
=\left(\varphi-s \varphi^{\prime}\right) \frac{y_{m}}{\alpha}\left(s^{m}{ }_{l|0| 0}+\lambda F s^{m}{ }_{l \mid 0}\right) b^{l}
\end{gathered}
$$
Since $F$ is not of Randers type, we easily conclude \eqref{W-GDWalphaBetaEq}.
\end{proof}
To further explore the question mentioned above, we will consider the class of non-trivial $D$-recurrent Finsler metrics \cite{D-rec}. Our aim is to establish the necessary and sufficient conditions for these metrics to qualify as non-trivial $W-G D W$-metrics.
\begin{thm}
Let $F$ be a $D$-recurrent Finsler metric on an $n$-dimensional manifold $M$, such that
$$
D_{j}{ }^{i}{ }_{k l \mid 0}=\sigma D_{j}{ }^{i}{ }_{k l},
$$
where $\sigma$ is a non-zero 1-homogenous smooth scalar function on $T M_{0}$ . $F$ is a non-trivial $W-GDW$-metric if and only if
\begin{equation}\label{WGDWDrecEq}
\sigma_{\mid 0}+\sigma^{2}+\lambda F \sigma=0,
\end{equation}
for some scalar function $\lambda=\lambda(x, y)$.
\end{thm}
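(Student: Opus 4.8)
The plan is to substitute the recurrence relation directly into the defining equation of the $W$-$GDW$ class and collapse the entire left-hand side into a scalar multiple of the Douglas tensor. First I would note that the definition $\tilde{D}_{j}{}^{i}{}_{kl}=D_{j}{}^{i}{}_{kl\mid 0}$ combined with the hypothesis $D_{j}{}^{i}{}_{kl\mid 0}=\sigma D_{j}{}^{i}{}_{kl}$ gives $\tilde{D}_{j}{}^{i}{}_{kl}=\sigma D_{j}{}^{i}{}_{kl}$. Differentiating once more horizontally along $y$, using the Leibniz rule for the $\mid 0$ operator (recall $y^{m}{}_{\mid r}=0$ for the Berwald connection) and the recurrence a second time, I obtain
\[
\tilde{D}_{j}{}^{i}{}_{kl\mid 0}=D_{j}{}^{i}{}_{kl\mid 0\mid 0}=\left(\sigma D_{j}{}^{i}{}_{kl}\right)_{\mid 0}=\left(\sigma_{\mid 0}+\sigma^{2}\right)D_{j}{}^{i}{}_{kl}.
\]

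Assembling the two pieces yields
\[
\tilde{D}_{j}{}^{i}{}_{kl\mid 0}+\lambda F\,\tilde{D}_{j}{}^{i}{}_{kl}=\left(\sigma_{\mid 0}+\sigma^{2}+\lambda F\sigma\right)D_{j}{}^{i}{}_{kl},
\]
so, writing $\mu:=\sigma_{\mid 0}+\sigma^{2}+\lambda F\sigma$, the metric $F$ is $W$-$GDW$ exactly when $\mu\,D_{j}{}^{i}{}_{kl}=U_{jkl}y^{i}$ for some tensor coefficients $U_{jkl}$. The structural observation that drives the proof is that, because $\sigma$ is nowhere zero, $F$ is a $GDW$-metric if and only if $\sigma D_{j}{}^{i}{}_{kl}=T_{jkl}y^{i}$, that is, if and only if the Douglas tensor itself has the degenerate form $(\text{tensor in }j,k,l)\cdot y^{i}$; consequently, non-triviality of the $W$-$GDW$ membership is precisely the assertion that $D_{j}{}^{i}{}_{kl}$ is \emph{not} of this form. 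For the forward implication, if $F$ is a non-trivial $W$-$GDW$-metric then $\mu\,D_{j}{}^{i}{}_{kl}=U_{jkl}y^{i}$; were $\mu$ nonzero on some open subset of $TM_{0}$, I could divide there to write $D_{j}{}^{i}{}_{kl}=(\mu^{-1}U_{jkl})y^{i}$, contradicting non-triviality, so $\mu\equiv 0$, which is exactly \eqref{WGDWDrecEq}. Conversely, if \eqref{WGDWDrecEq} holds then $\mu=0$, the displayed left-hand side vanishes identically, and the $W$-$GDW$ equation is satisfied with $U_{jkl}=0$; since the Douglas tensor is not of the form $(\text{tensor})\cdot y^{i}$, the metric is not $GDW$ and the membership is genuinely non-trivial.

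The computation itself is a routine double application of the Leibniz rule, so I expect the only delicate point to be the bookkeeping surrounding the word \emph{non-trivial}: one must argue cleanly that, under $\sigma\neq 0$, failing to be a $GDW$-metric is equivalent to $D_{j}{}^{i}{}_{kl}$ not being proportional to $y^{i}$, and then justify the pointwise division that produces $\mu\equiv 0$. The contraction identities of the Douglas tensor following from \eqref{D2}, namely $D_{j}{}^{i}{}_{kl}y^{j}=0$ together with the trace-freeness $D_{m}{}^{m}{}_{kl}=D_{j}{}^{i}{}_{ki}=0$, are useful here to confirm that $D$ genuinely carries content transverse to $y^{i}$, reinforcing that a nonzero scalar factor cannot be absorbed into a single factor of $y^{i}$ unless $D$ already has the degenerate form.
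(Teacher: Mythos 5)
Your proposal is correct and follows essentially the same route as the paper: both reduce the $W$-$GDW$ equation under the recurrence to $(\sigma_{\mid 0}+\sigma^{2}+\lambda F\sigma)\,D_{j}{}^{i}{}_{kl}=U_{jkl}y^{i}$ and then invoke non-triviality (the Douglas tensor having a component transverse to $y^{i}$, which the paper encodes by writing $D_{j}{}^{i}{}_{kl\mid 0}=T_{jkl}y^{i}+d_{j}{}^{i}{}_{kl}$ with $d_{j}{}^{i}{}_{kl}\neq 0$) to force the scalar factor to vanish. Your treatment of the converse direction and of the division by the scalar is, if anything, slightly more explicit than the paper's own.
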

\begin{proof}
Let $F$ be a Finsler metric that is classified as a $D$-recurrent metric, satisfying
\begin{equation}\label{D-rec}
D_{j}{ }^{i}{ }_{k l \mid 0}=\sigma D_{j}{ }^{i}{ }_{k l}.
\end{equation}
From this, we obtain the following equation
$$
D_{j}{ }^{i}{ }_{k l|0| 0}=\left(\sigma_{\mid 0}+\sigma^{2}\right) D_{j}{ }^{i}{ }_{k l}.
$$
According to the definition of $W-GDW$-metrics, $F$ qualifies as a $W-GDW$-metric if and only if there exist scalar function $\lambda=\lambda(x, y)$ and the certain tensor's coefficients $U_{jkl}$ such that
\begin{equation}\label{D-recUjklyi}
U_{j k l} y^{i}=D_{j}{ }^{i}{ }_{k l|0| 0}+\lambda F D_{j}{ }^{i}{ }_{k l \mid 0}=\left(\sigma_{\mid 0}+\sigma^{2}+\lambda F \sigma\right) D_{j}{ }^{i}{ }_{k l}.
\end{equation}
Put
$$
D_{j}{ }^{i}{ }_{k l \mid 0}=T_{j k l} y^{i}+d_{j}{^i}_{kl},
$$
where $d_{j}{ }^{i}{ }_{k l} \neq 0$ and does not contain any coefficients of $y^{i}$, indicating that $F$ is not a $G D W$ metric. Based on \eqref{D-rec} and noting that $\sigma$ is a non-zero scalar function, we have
$$
D_{j}{ }^{i}{ }_{k l}=\frac{1}{\sigma}\left(T_{j k l} y^{i}+d_{j}{ }^{i}{ }_{k l}\right).
$$
Substituting the aforemention equation in \eqref{D-recUjklyi} gives
$$
U_{j k l} y^{i}=\frac{1}{\sigma}\left(\sigma_{\mid 0}+\sigma^{2}+\lambda F \sigma\right)\left(T_{j k l} y^{i}+d_{j}{^i}_{k l}\right).
$$
From this equation, we concludes the equation \eqref{WGDWDrecEq}.
\end{proof}
The significant class of $G D W$-metrics encompasses two important subclasses of Finsler metrics: Douglas and Weyl metrics. As mentioned earlier, we have generalized the class of $G D W$ metrics to include $W-G D W$-metrics. To relate this to the earlier classes of Finsler metrics, we introduce two new subclasses that generalize Douglas and Weyl metrics, respectively. Notably, both of these subclasses are subsets of $W-G D W$-metrics. In the following sections, we will explore these two intriguing classes of Finsler metrics.
\subsubsection{\textbf{Generalized $\tilde{D}$-metrics}}
Berwald developed the concept of stretch curvature as a generalization of Landsberg curvature \cite{Ber1} as
\[
\Sigma_{ijkl}=2(L_{ijk|l}-L_{ijl|k}).
\]
Additionally, Berwald formulated various Finsler metric classes, including Landsberg metrics and stretch metrics in 1928 \cite{Ber2}. Deriving from the Berwald curvature through covariant horizontal differentiation along Finslerian geodesics, the particular quantity
\[
\widetilde{B}_y: T_pM \times T_pM \times T_pM \longrightarrow T_pM, \quad y \in T_pM, \quad p \in M,
\]
\[
\widetilde{B}_y (u, v, w)= \widetilde{B}_j{^i}_{kl}u^i v^j w^k \frac{\partial}{\partial x^i}|_p, \quad u, v, w \in T_pM,
\]
where $\widetilde{B}_j{^i}_{kl}=B_j{^i}_{kl|0}$, has been introduced in \cite{Sh2}. In \cite{KozmaNew}, a presentation was made on two classes of stretch Finsler metrics that are defined with respect to $\widetilde{B}$ (and $\widetilde{H}_{jk}=H_{jk|m}y^m$). Within this section, our objective is to present a stretch tensor known as stretch Douglas which introduced in the previous section. Furthermore, we will introduce a fresh category of Finsler metric that exhibits the same level of projective invariance. \\
We define a metric based on the expanded notion of Douglas curvature, termed $\tilde{D}$-metric, has been introduced in Preliminary Section \cite{Dtilde-Sadegh}. Based on this notation, we introduce generalized $\tilde{D}$-metrics as follow. \\
A Finsler metric is called generalized $\tilde{D}$-metric if
\be\label{GDtilde}
\tilde{D}_j{^i}_{kl|0}+ \mu_r \tilde{D}_j{^r}_{kl} y^i=0,
\ee
for some tensor's coefficients denoted by $\mu_r$ and $\tilde{D}_j{^i}_{kl}=D_j{^i}_{kl|0}$.

From the definition of $W-GDW$ metrics, setting $U_{jkl}=-\mu_r \tilde{D}j{^i}{kl}$ and $\lambda=0$ directly leads to the following proposition.

\begin{prop}\label{GDtildinWGDW}
Every generalized $\tilde{D}$-metric is a $W-GDW$-metric.
\end{prop}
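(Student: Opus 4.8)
The plan is to prove the inclusion by a direct comparison of the two defining equations, since the containment is essentially definitional. First I would recall that, by \eqref{GDtilde}, a generalized $\tilde{D}$-metric satisfies
\[
\tilde{D}_j{^i}_{kl|0}+ \mu_r \tilde{D}_j{^r}_{kl} y^i=0
\]
for some tensor coefficients $\mu_r$, where $\tilde{D}_j{^i}_{kl}=D_j{^i}_{kl|0}$. My goal is to exhibit a scalar function $\lambda=\lambda(x,y)$ on $TM$ and tensor coefficients $U_{jkl}$ such that the $W-GDW$ equation
\[
\tilde{D}_j{^i}_{kl|0}+ \lambda F \tilde{D}_j{^i}_{kl}= U_{jkl} y^i
\]
holds.

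Next I would isolate the derivative term in \eqref{GDtilde}, rewriting it as $\tilde{D}_j{^i}_{kl|0}= -\mu_r \tilde{D}_j{^r}_{kl}\, y^i$, and then set $\lambda=0$ together with $U_{jkl}=-\mu_r \tilde{D}_j{^r}_{kl}$. With these choices the term $\lambda F \tilde{D}_j{^i}_{kl}$ vanishes, so the left-hand side of the $W-GDW$ equation collapses to $\tilde{D}_j{^i}_{kl|0}$, while the right-hand side becomes $U_{jkl}y^i = -\mu_r \tilde{D}_j{^r}_{kl}\,y^i$; these agree precisely by the rearranged form of \eqref{GDtilde}. Thus $F$ satisfies the $W-GDW$ equation and is a $W-GDW$-metric.

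The only point deserving explicit justification — and the closest thing to an obstacle, though it is minor — is that the prescribed $U_{jkl}=-\mu_r \tilde{D}_j{^r}_{kl}$ are genuinely the coefficients of a tensor field, as required by the definition of a $W-GDW$-metric. This is immediate from the tensorial character of the ingredients: $\mu_r$ are the coefficients of a $1$-form and $\tilde{D}_j{^r}_{kl}$ are the coefficients of the stretch Douglas tensor, so their contraction over $r$ is a well-defined tensor of type $(0,3)$ in the indices $j,k,l$. Once this is noted, nothing further remains; I expect no substantive difficulty, as the result is simply the specialization of the $W-GDW$ condition to the parameter value $\lambda=0$.
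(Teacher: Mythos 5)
Your proof is correct and follows essentially the same route as the paper, which likewise obtains the proposition by setting $\lambda=0$ and $U_{jkl}=-\mu_r \tilde{D}_j{}^r{}_{kl}$ in the $W-GDW$ equation so that it reduces to \eqref{GDtilde}. Your added remark on the tensorial character of $U_{jkl}$ is a harmless (and reasonable) elaboration of what the paper leaves implicit.
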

The subsequent Lemma assists in establishing a link between this novel category of Finsler metrics and other significant categories, providing additional instances for examination.
\begin{lem}\label{WeylDThetaLemma}\cite{W-Weyl}
For every Finsler metric F, with Weyl curvature $W=\left\{W_{y}\right\}_{y \in T_{x} M \backslash 0}$ and Douglas curvature $D=\left\{D_{y}\right\}_{y \in T_{x} M \backslash 0}$, it follows that
\begin{equation}\label{WtildandDandTheta}
W_{j}{ }^{i}{ }_{m l . k} y^{m}=D_{j}{ }^{i}{ }_{k l \mid 0}-\frac{1}{n+1} \theta_{j k l} y^{i},
\end{equation}
where $\theta_{j k l}=2 E_{j k \mid l}-\frac{1}{3}\left(R^{s}{ }_{l . s}-(n+2) R_{. l}\right)_{. j . k}$ and $W_{j}{ }^{i}{ }_{kl}=\frac{1}{3}(W{^i}_{k.l}-W{^i}_{l.k})_{.j}$.
\end{lem}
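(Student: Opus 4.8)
The plan is to prove the identity by expressing both sides through the Riemann and Berwald curvatures, linking them with the Berwald--Riemann commutation formula \eqref{RieBer}, and then packaging all non-invariant correction terms into $\theta_{jkl}$. First I would establish the bridge relating the Douglas side to the Riemann curvature: relabelling $k\leftrightarrow m$ in \eqref{RieBer} gives $B_j{}^i{}_{kl|m}-B_j{}^i{}_{km|l}=R_j{}^i{}_{ml.k}$, and contracting with $y^m$ and using $B_j{}^i{}_{km}y^m=0$ (Euler's theorem, since $\partial_k\partial_lG^i$ is $0$-homogeneous) together with $y^m{}_{|l}=0$ kills the second term, yielding
\[
R_j{}^i{}_{ml.k}\,y^m=B_j{}^i{}_{kl\mid 0}.
\]
Inserting \eqref{D2} to trade the Berwald curvature for the Douglas curvature, and noting that $E_{jk\mid 0}=E_{jk\mid m}y^m=H_{jk}$, this becomes
\[
R_j{}^i{}_{ml.k}\,y^m=D_j{}^i{}_{kl\mid 0}+\tfrac{2}{n+1}\bigl(E_{jk\mid 0}\delta^i_l+E_{jl\mid 0}\delta^i_k+E_{kl\mid 0}\delta^i_j+E_{jk.l\mid 0}\,y^i\bigr).
\]

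Second, I would write the Weyl curvature explicitly. With $R:=R^m{}_m$, the transformation law \eqref{Rieproj} gives $\bar R=R+(n-1)E$ and, since $\tau_k$ is $1$-homogeneous, $\tfrac1{n+1}A^m{}_{k.m}$ picks up exactly $+\tau_k$ under projective change, where $A^i{}_k:=R^i{}_k-\tfrac{R}{n-1}\delta^i_k$. Hence the invariant combination is
\[
W^i{}_k=R^i{}_k-\tfrac{R}{n-1}\delta^i_k-\tfrac{1}{n+1}\Bigl(R^s{}_{k.s}-\tfrac{1}{n-1}R_{.k}\Bigr)y^i,
\]
so $\Delta^i{}_k:=W^i{}_k-R^i{}_k$ consists only of $\delta^i_k$- and $y^i$-terms built from $R$ and $R^s{}_{k.s}$. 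Because $W_j{}^i{}_{ml}$ and $R_j{}^i{}_{ml}$ arise from $W^i{}_k$ and $R^i{}_k$ by the same recipe \eqref{Rikl}, one has $W_j{}^i{}_{ml.k}y^m-R_j{}^i{}_{ml.k}y^m=\tfrac13(\Delta^i{}_{m.l}-\Delta^i{}_{l.m})_{.j.k}\,y^m$. Combining this with the two displays above reduces the lemma to the purely tensorial identity
\[
\tfrac13(\Delta^i{}_{m.l}-\Delta^i{}_{l.m})_{.j.k}\,y^m+\tfrac{2}{n+1}\bigl(E_{jk\mid 0}\delta^i_l+E_{jl\mid 0}\delta^i_k+E_{kl\mid 0}\delta^i_j+E_{jk.l\mid 0}y^i\bigr)=-\tfrac1{n+1}\theta_{jkl}\,y^i.
\]

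To verify this I would expand $\Delta^i{}_{m.l}-\Delta^i{}_{l.m}$: the mixed second $y$-derivatives of $R$ cancel, leaving a $y^i$-part proportional to $R^s{}_{m.s.l}-R^s{}_{l.s.m}$ plus several $\delta^i$-terms. The decisive input is the trace of \eqref{RieBer}: using $B_i{}^i{}_{ml}=2E_{ml}$ and $R_i{}^i{}_{ml}=\tfrac13\bigl((R^s{}_{m.s})_{.l}-(R^s{}_{l.s})_{.m}\bigr)$ one obtains
\[
2\bigl(E_{ml\mid k}-E_{mk\mid l}\bigr)=\tfrac13\bigl((R^s{}_{k.s})_{.l}-(R^s{}_{l.s})_{.k}\bigr)_{.m},
\]
which is exactly the conversion between the $E_{jk\mid l}$ data and the $(R^s{}_{l.s})_{.j.k}$ data that occur in $\theta_{jkl}=2E_{jk\mid l}-\tfrac13\bigl(R^s{}_{l.s}-(n+2)R_{.l}\bigr)_{.j.k}$.

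The main obstacle is the bookkeeping in this final reduction. One must track carefully how the two vertical derivatives $._{.j}$ and $._{.k}$ interact with the explicit $y^i$ factor in $\Delta$ (each producing an extra $\delta^i_j$ or $\delta^i_k$ via $y^i{}_{.j}=\delta^i_j$) and with its $\delta^i$ factors, and then show that every genuinely $\delta^i$-valued piece cancels against the $E_{\cdot\cdot\mid0}\delta^i$ terms coming from \eqref{D2}, leaving a single $y^i$-multiple. Pinning down the coefficient $(n+2)$ in front of $R_{.l}$, rather than a naive value, is the delicate point: it is precisely the consistency check that the $E$-correction from the Douglas tensor and the Ricci-trace correction from the Weyl tensor assemble correctly into $\theta_{jkl}$.
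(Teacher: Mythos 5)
First, a point of comparison: the paper does not actually prove Lemma \ref{WeylDThetaLemma}; it is imported from the submitted reference \cite{W-Weyl}, so there is no in-paper argument to measure your proposal against. Judged on its own terms, your strategy is the standard and correct one. The bridge $R_j{}^i{}_{ml.k}y^m=B_j{}^i{}_{kl\mid 0}$ obtained from \eqref{RieBer} using $B_j{}^i{}_{km}y^m=0$ and $y^m{}_{\mid l}=0$ is right; the passage from $B$ to $D$ via \eqref{D2} is right; the observation that $\Delta^i{}_k=W^i{}_k-R^i{}_k$ consists only of $\delta^i_k$- and $y^i$-terms built from Ricci-type traces is right; and the trace of \eqref{RieBer} over $i=j$ is indeed the identity that converts the $(R^s{}_{l.s})_{.j.k}$-data into the $E_{jk\mid l}$-data appearing in $\theta_{jkl}$. (Your homogeneity check $\bar R^m{}_m=R^m{}_m+(n-1)E$, which follows from $\tau_ky^k=-E$ in \eqref{Rieproj}, also holds.)

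The gap is that the argument stops exactly where the lemma begins. Your displayed ``purely tensorial identity'' is equivalent to \eqref{WtildandDandTheta}, and you do not establish it: you describe the bookkeeping that would be required --- tracking the extra $\delta^i_j$ and $\delta^i_k$ produced when the vertical derivatives hit the explicit $y^i$ in $\Delta^i{}_k$, cancelling every genuinely $\delta^i$-valued piece against the $E_{\cdot\cdot\mid 0}\,\delta^i$ terms coming from \eqref{D2}, and extracting the coefficient $(n+2)$ --- but you do not carry it out. Since all conceptual ingredients are already in place after your first two displays, the entire content of the lemma (that the leftover is a pure $y^i$-multiple, and that its coefficient is precisely $-\frac{1}{n+1}\theta_{jkl}$ with $\theta_{jkl}=2E_{jk\mid l}-\frac{1}{3}\left(R^s{}_{l.s}-(n+2)R_{.l}\right)_{.j.k}$) lives in that unperformed computation; in particular, the normalization of $R$ in $R_{.l}$ (whether $R=R^m{}_m$ or $R=\frac{1}{n-1}R^m{}_m$) directly determines whether $(n+2)$ is the correct coefficient, and this cannot be settled by the qualitative description you give. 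As written, the proposal is a sound plan rather than a proof; completing it requires writing out $\Delta^i{}_{m.l}-\Delta^i{}_{l.m}$, applying $(\cdot)_{.j.k}$, contracting with $y^m$, and exhibiting the cancellations term by term.
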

Examples presented in the following demonstrate the existence of non-trivial generalized $\tilde{D}$-metric forms.
\begin{ex}\cite{Randers} Put
$$
\Omega=\left\{(x, y, z) \in R^{3} \mid x^{2}+y^{2}+z^{2}<1\right\}, \quad p=(x, y, z) \in \Omega, \quad y=(u, v, w) \in T_{p} \Omega,
$$
Define the Randers metric $F=\alpha+\beta$ by
$$
\alpha=\frac{\sqrt{(-y u+x v)^{2}+\left(u^{2}+v^{2}+w^{2}\right)\left(1-x^{2}-y^{2}\right)}}{1-x^{2}-y^{2}}, \quad \beta=\frac{-y u+x v}{1-x^{2}-y^{2}}.
$$
The above Randers metric has vanishing flag curvature $K=0$ and $S$-curvature $S=0$. Then $R{^i}_k=0$ and $E_{jk}=0$, it means that $\theta_{jkl}=0$. $F$ has zero Weyl curvature and $\beta$ is not closed then $F$ is not of Douglas type. According to \eqref{WtildandDandTheta} in Lemma \ref{WeylDThetaLemma}.
$$
\tilde{D}_{j}{ }^{i}{ }_{k l}=\frac{1}{n+1} \theta_{j k l} y^{i}=0.
$$
It means that the equation \eqref{GDtilde} holds and it is of generalized $\tilde{D}$-metric.
\end{ex}
\begin{cor}\label{DinGDtild}
The class of Douglas metrics is a proper subset of the class of generalized $\tilde{D}$-metrics.
\end{cor}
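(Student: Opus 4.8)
The statement asserts that the class of Douglas metrics is a \emph{proper} subset of the class of generalized $\tilde{D}$-metrics, so the proof splits naturally into two parts: (i) every Douglas metric is a generalized $\tilde{D}$-metric, and (ii) the inclusion is strict. The plan is to dispatch the inclusion by a trivial verification of the defining equation \eqref{GDtilde}, and then to exhibit a witness for strictness using the example already developed in the excerpt.

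For the inclusion, I would argue as follows. A Douglas metric is characterized by vanishing Douglas curvature, $D_j{}^i{}_{kl}=0$. Since the stretch Douglas curvature is defined by $\tilde{D}_j{}^i{}_{kl}=D_j{}^i{}_{kl\mid 0}=D_j{}^i{}_{kl\mid m}y^m$, the vanishing of $D_j{}^i{}_{kl}$ immediately forces $\tilde{D}_j{}^i{}_{kl}=0$. Substituting this into the generalized $\tilde{D}$-metric equation \eqref{GDtilde}, namely $\tilde{D}_j{}^i{}_{kl\mid 0}+\mu_r \tilde{D}_j{}^r{}_{kl}y^i=0$, both terms vanish identically for \emph{any} choice of the coefficients $\mu_r$ (in particular $\mu_r=0$), so the equation holds trivially. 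Hence every Douglas metric is a generalized $\tilde{D}$-metric, establishing the set-theoretic inclusion.

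For properness, the key is to produce a Finsler metric that satisfies \eqref{GDtilde} but is \emph{not} of Douglas type, i.e.\ has $D_j{}^i{}_{kl}\neq 0$. Here I would invoke the Randers metric $F=\alpha+\beta$ constructed in the preceding Example on the unit ball $\Omega\subset\mathbb{R}^3$. That metric has vanishing flag curvature and vanishing $S$-curvature, whence $R^i{}_k=0$ and $E_{jk}=0$, so $\theta_{jkl}=0$; Lemma \ref{WeylDThetaLemma} then gives $\tilde{D}_j{}^i{}_{kl}=\tfrac{1}{n+1}\theta_{jkl}y^i=0$, so \eqref{GDtilde} holds and $F$ is a generalized $\tilde{D}$-metric. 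On the other hand, because $\beta$ is \emph{not} closed, $F$ fails to be a Douglas metric. This single example simultaneously shows that the inclusion is strict.

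The only delicate point—and the step I would flag as the real obstacle—is justifying that the Example's metric is not Douglas, i.e.\ that non-closedness of $\beta$ genuinely forces $D_j{}^i{}_{kl}\neq 0$ for a Randers metric. This relies on the known characterization that a Randers metric $F=\alpha+\beta$ is a Douglas metric if and only if $\beta$ is closed (equivalently $s_{ij}=0$); I would cite the standard reference for this fact (as the Example itself implicitly does) rather than recompute the Douglas tensor. Everything else is routine: the inclusion is a one-line substitution, and once the non-Douglas property of the witness is granted, strictness is immediate.
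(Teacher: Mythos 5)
Your proposal is correct and follows essentially the same route as the paper: the inclusion is the trivial observation that $D_j{}^i{}_{kl}=0$ forces $\tilde{D}_j{}^i{}_{kl}=0$ so that \eqref{GDtilde} holds vacuously, and properness is witnessed by the preceding Randers example with $K=0$ and $S=0$, where Lemma \ref{WeylDThetaLemma} yields $\tilde{D}_j{}^i{}_{kl}=0$ while non-closedness of $\beta$ rules out the Douglas property. Your explicit flagging of the Randers--Douglas characterization ($F=\alpha+\beta$ is Douglas iff $\beta$ is closed) only makes explicit what the paper leaves implicit.
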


In the paper \cite{Dtilde-Sadegh}, the class of relatively isotropic $\tilde{D}$-metrics is introduced as Finsler metrics that satisfy the following equation,
$$
\tilde{D}_{j}{ }^{i}_{kl\mid 0}+\lambda F \tilde{D}_{j}{ }^{i}{ }_{k l}=0,
$$
where $\lambda=\lambda(x, y)$ is scalar function on $T M$. In the following lemma, it is shown that
\begin{lem}\label{GDtildandRID}
Let $F$ be a GDW-metric. It is relatively isotropic $\tilde{D}$-metric if and only if it is a generalized $\tilde{D}$-metric.
\end{lem}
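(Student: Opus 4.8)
The plan is to exploit the $GDW$ hypothesis to show that both defining conditions collapse to one and the same equation, differing only in how a degree-one scalar coefficient is packaged. Since $F$ is a $GDW$-metric, by definition $\tilde{D}_{j}{}^{i}{}_{kl}=D_{j}{}^{i}{}_{kl\mid 0}=T_{jkl}y^{i}$ for suitable coefficients $T_{jkl}$; that is, the stretch Douglas curvature is already a multiple of $y^{i}$. This single fact is the engine of the whole argument, and recognizing it is really the crux: once $\tilde{D}$ is proportional to $y^{i}$, its further horizontal derivative along $y$ inherits the same structure.

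Concretely, I would first compute $\tilde{D}_{j}{}^{i}{}_{kl\mid 0}=(T_{jkl}y^{i})_{\mid 0}=T_{jkl\mid 0}\,y^{i}$, using that the Berwald horizontal derivative annihilates $y^{i}$ (so $y^{i}{}_{\mid 0}=0$, and the Leibniz rule leaves only the $T_{jkl\mid 0}$ term). Substituting this together with $\tilde{D}_{j}{}^{i}{}_{kl}=T_{jkl}y^{i}$ into the relatively isotropic $\tilde{D}$-equation yields $(T_{jkl\mid 0}+\lambda F\,T_{jkl})y^{i}=0$, while substituting into the generalized $\tilde{D}$-equation \eqref{GDtilde}, after writing $\tilde{D}_{j}{}^{r}{}_{kl}=T_{jkl}y^{r}$, yields $(T_{jkl\mid 0}+\mu_{r}y^{r}\,T_{jkl})y^{i}=0$. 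Thus both conditions are exactly $(T_{jkl\mid 0}+\Lambda\,T_{jkl})y^{i}=0$ for a scalar $\Lambda$ that is positively homogeneous of degree one in $y$, namely $\Lambda=\lambda F$ in the first case and $\Lambda=\mu_{r}y^{r}$ in the second.

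It then remains to pass between the two admissible forms of $\Lambda$, which is the only point requiring a little care. For the direction that a relatively isotropic $\tilde{D}$-metric is a generalized $\tilde{D}$-metric, I would set $\mu_{r}:=\lambda F_{.r}$; since $F_{.r}y^{r}=F$ by Euler's theorem and $F_{.r}$ is a well-defined covector of degree zero, these coefficients satisfy $\mu_{r}y^{r}=\lambda F$ and reproduce the generalized equation. Conversely, given $\mu_{r}$, I would set $\lambda:=(\mu_{r}y^{r})/F$, a well-defined scalar function of degree zero on the slit tangent bundle $TM_{0}$ (where $F>0$), which gives $\lambda F=\mu_{r}y^{r}$ and recovers the relatively isotropic equation. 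The homogeneity bookkeeping ($\lambda$ of degree zero, $\mu_{r}$ of degree zero, $\Lambda$ of degree one) is what guarantees that both substitutions are legitimate; I do not anticipate any genuine obstacle beyond verifying $y^{i}{}_{\mid 0}=0$ and the well-definedness of these scalar and covector coefficients away from the zero section.
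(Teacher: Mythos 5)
Your proof is correct and follows essentially the same route as the paper's: both arguments use the $GDW$ hypothesis to write $\tilde{D}_{j}{}^{i}{}_{kl}=T_{jkl}y^{i}$, reduce each defining condition to $(T_{jkl\mid 0}+\Lambda T_{jkl})y^{i}=0$, and pass between $\Lambda=\lambda F$ and $\Lambda=\mu_{r}y^{r}$ via the choice $\mu_{r}=\lambda F_{.r}$. Your treatment of the converse (setting $\lambda=\mu_{r}y^{r}/F$ on $TM_{0}$) is slightly more explicit than the paper's, which merely asserts that the converse holds, but the underlying idea is identical.
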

\begin{proof}
Let us assume that $F$ is a $GDW$-metric. Under this assumption, we can express the following relationship
\[
D_j{^i}_{kl|0}= T_{jkl} y^i,
\]
where $T_{jkl}$ represents certain coefficients of a tensor. Next, for a scalar function defined as $\lambda = \lambda(x,y)$, we have the equation
\be\label{RIDEq}
\tilde{D}_{j}{ }^{i}_{kl\mid 0}+\lambda F \tilde{D}_{j}{ }^{i}{ }_{k l}= (T_{jkl|0}+\lambda F T_{jkl})y^i,
\ee
Additionally, for some tensor coefficients denoted by $\mu_r$, we can formulate
\be\label{GDtildEq}
\tilde{D}_{j}{ }^{i}_{k l \mid 0}+\mu_r \tilde{D}_{j}{ }^{r}{ }_{k l}y^i= (T_{jkl|0}+\mu_0 T_{jkl})y^i.
\ee
Now, if we consider $F$ to be a relatively isotropic $\tilde{D}$-metric, we can derive from equation \eqref{RIDEq} that
\[
T_{jkl|0} = -\lambda F T_{jkl}.
\]
Substituting this expression into equation \eqref{GDtildEq} yields
\[
\tilde{D}_{j}{^i}_{k l \mid 0} + \mu_r \tilde{D}_{j}{^r}_{k l} y^i = (\mu_0 - \lambda F) T_{jkl} y^i.
\]
By selecting $\mu_r = \lambda F_{.r}$, we can conclude that $F$ is a generalized $\tilde{D}$-metric. The converse statement holds true as well.
\end{proof}
According to above Lemma, we discover that the example presented in the paper \cite{Dtilde-Sadegh}, which is a Weyl metric but not of relatively isotropic Douglas type, yields a Weyl metric which is not a generalized $\tilde{D}$-metric.
\begin{ex} (\cite{Dtilde-Sadegh}, \cite{Sh3})
Let us consider the Randers metric $F=\alpha+\beta$ which is given by
$$
\alpha=\frac{\sqrt{\left(1-|a|^{2}|x|^{2}\right)|y|^{2}+\left(|x|^{2}<a, y>-2<a, x><x, y>\right)^{2}}}{1-|a|^{2}|x|^{2}}
$$
and
$$
\beta=-\frac{|x|^{2}<a, y>-2<a, x><x, y>}{1-|a|^{2}|x|^{2}}
$$
$F$ is of isotropic $S$-curvature, $S=(n+1) c F$, with $c=<a, x>$, and of scalar flag curvature $\lambda=\lambda(x,y)$ as
$$
\lambda=3 \frac{c_{; 0}}{F}+3 c^{2}-2|a|^{2}|x|^{2}
$$
However, we have
\be\label{ajkandbk}
\begin{array}{r}
a_{j k}=\frac{\delta_{j k}}{\Delta}+b_{j} b_{k}, \\
b_{k}=2 \frac{c}{\Delta} x_{k}-\frac{|x|^{2}}{\Delta} c_{; k}.
\end{array}
\ee
where $\Delta=1-|a|^{2}|x|^{2}$. Using Maple for the computation, which has been done in \cite{Sh3}, we have
\be\label{sjkandsj}
\begin{array}{r}
s_{j k}=\frac{2}{\Delta^{2}}\left(c_{; k} x_{j}-c_{; j} x_{k}\right), \\
s_{k}=2 \frac{|a|^{2}|x|^{2}}{\Delta} x_{k}+2 \frac{c}{\Delta} c_{; k}.
\end{array}
\ee
and
\begin{equation}\label{Gialpha}
G^{i}={ }^{\alpha} G^{i}+P y^{i}+\alpha s^{i}{ }_{0},
\end{equation}
where $P=c(\alpha-\beta)-s_{0}$.
\end{ex}
Now, let us assume $n=3$, constant vector $a=(-1,0,0), X=(x, y, z)$ and $Y=(u, v, w)$. Then we have $c=-x$ and $c_{; k}=-\delta_{1 k}$. Hence, we have
\[
s_{j k}=-\frac{2}{\Delta^{2}}\left(\delta_{1 k} x_{j}-\delta_{1 j} x_{k}\right),
\]
which indicates that $\beta$ is not closed. Consequently, $F$ is not a Douglas metric, even though it has scalar flag curvature. Therefore, it is a Weyl metric and, as a result, a $GDW$-metric.\\
Based on the analysis conducted in \cite{Dtilde-Sadegh}, we conclude that $F$ is not a relatively isotropic $\tilde{D}$ metric. Consequently, by the aforementioned lemma, it follows that $F$ is also not a generalized $\tilde{D}$-metric

This finding then allows us to derive the following corollary.
\begin{cor}\label{WeylnotGDtild}
There is a Weyl metric which is not of generalized $\tilde{D}$-metrics.
\end{cor}
\subsubsection{\textbf{Generalized Weakly-Weyl Finsler metrics}}
Within this particular area, a fresh category of projective invariant Finsler metrics called generalized weakly-Weyl Finsler metrics is being introduced, all created utilizing the Weyl metric class. Although not strictly classified as a subset of the $GDW$-metrics, this new class does have intersection with the class of $GDW$ metrics. Here, we explore some properties and characteristics of generalized weakly-Weyl Finsler metrics. More properties of these metrics, along with their relationships to other established classes of Finsler metrics, have been considered in \cite{W-Weyl}.

A Finsler metric $F$ is called generalized weakly-Weyl Finsler metric if it satisfies the following equation.
\begin{equation*}
\tilde{W}_j{^i}_{kl|0}+\lambda F \tilde{W}_j{^i}_{kl} = \mu_r \tilde{W}_j{^r}_{kl} y^i,
\end{equation*}
for some tensors $\mu_r$ and smooth scalar function $\lambda$ on $TM$. Note that, $\tilde{W}_{j}{ }^{i}{ }_{k l}={W}_{j}{ }^{i}{ }_{ml.k}y^m$ and ${W}_{j}{ }^{i}{ }_{k l}=\frac{1}{3}\left(W^{i}{ }_{k . l}-W^{i}{ }_{l . k}\right)_{. j}$ is referred to as the weakly-Weyl curvature, which has been extensively introduced in [14]. First of all, we have
\begin{thm}\cite{W-Weyl}
The class of generalized weakly-Weyl Finsler metrics is closed under projective changes.
\end{thm}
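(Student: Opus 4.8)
The plan is to transcribe the proof of the opening theorem (closure of the class of $W-GDW$-metrics), replacing the Douglas tensor $D_{j}{}^{i}{}_{kl}$ by the weakly-Weyl curvature $\tilde{W}_{j}{}^{i}{}_{kl}=W_{j}{}^{i}{}_{ml.k}y^{m}$. The structural reason this should succeed is that $\tilde{W}_{j}{}^{i}{}_{kl}$ is assembled from the Weyl curvature $W$ using only vertical differentiation and contraction with $y^{m}$, operations that do not see the connection; since $W$ is a projective invariant, so is $\tilde{W}_{j}{}^{i}{}_{kl}$, and hence the projectively related metrics $F$ and $\bar{F}$ carry one and the same tensor $\tilde{W}_{j}{}^{i}{}_{kl}$. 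One point of homogeneity bookkeeping controls the whole argument: because $W^{i}{}_{k}$ is $2$-homogeneous in $y$, the tensor $\tilde{W}_{j}{}^{i}{}_{kl}$ is $0$-homogeneous, exactly as $\tilde{D}_{j}{}^{i}{}_{kl}=D_{j}{}^{i}{}_{kl\mid 0}$ was in the first theorem, which is why the factor $2P$ seen there will reappear here.

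First I would feed the deformation of the Berwald connection under $\bar{G}^{i}=G^{i}+Py^{i}$, recorded in \eqref{Gij} and \eqref{Gijk}, into the horizontal covariant derivative of $\tilde{W}$. Since $\tilde{W}_{j}{}^{i}{}_{kl}$ carries the same index type $(1,3)$ as $D_{j}{}^{i}{}_{kl}$, this yields an identity of the same shape as \eqref{D||andD|}, with the single difference that the coefficient of the $P_{.m}\tilde{W}_{j}{}^{i}{}_{kl}$ term is $-2$ rather than $-1$. This coefficient equals $-(d+2)$, where $d$ is the $y$-homogeneity degree of the tensor, and $d$ has shifted from $-1$ (for $D$) to $0$ (for $\tilde{W}$). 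Contracting this identity with $y^{m}$ and using $P_{.m}y^{m}=P$ and $P_{.m.r}y^{m}=0$ will then simplify the right-hand side.

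The step I expect to be the main obstacle is establishing the three contraction identities
\[
\tilde{W}_{j}{}^{i}{}_{kl}\,y^{j}=0,\qquad \tilde{W}_{j}{}^{i}{}_{kl}\,y^{k}=0,\qquad \tilde{W}_{j}{}^{i}{}_{kl}\,y^{l}=0,
\]
which play the role for $\tilde{W}$ that the vanishing traces of the Douglas tensor played silently in \eqref{D||andD|}: they are what make the $P_{.j},P_{.k},P_{.l}$ terms drop out after contraction with $y^{m}$ and what annihilate the second vertical derivatives of $P$. Writing $\tilde{W}_{j}{}^{i}{}_{kl}=W^{i}{}_{pl.j.k}\,y^{p}$ with $W^{i}{}_{kl}=\tfrac{1}{3}(W^{i}{}_{k.l}-W^{i}{}_{l.k})$, the first two are immediate: the contracted index sits on a vertical derivative of the $0$-homogeneous quantities $W^{i}{}_{pl.k}$ and $W^{i}{}_{pl.j}$, so Euler's relation kills them. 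The $y^{l}$ contraction is genuinely more delicate, since there the contracted index occupies a curvature slot rather than a derivative slot; I would expand $W^{i}{}_{pm.j.k}\,y^{m}$ by the product rule applied to $W^{i}{}_{pm}y^{m}=W^{i}{}_{p}$ and to the fundamental identity $W^{i}{}_{k}y^{k}=0$ (the Weyl operator annihilates the flag pole, inherited from $R^{i}{}_{k}y^{k}=0$), after which the surviving terms cancel against $W^{i}{}_{jk}+W^{i}{}_{kj}=0$, the antisymmetry of $W^{i}{}_{kl}$ in $(k,l)$. Granting these identities, the contracted transformation law collapses to the clean $\tilde{W}$-analogue of \eqref{D||0andD|0},
\[
\tilde{W}_{j}{}^{i}{}_{kl||0}-\tilde{W}_{j}{}^{i}{}_{kl\mid 0}=-2P\,\tilde{W}_{j}{}^{i}{}_{kl}+P_{.r}\,\tilde{W}_{j}{}^{r}{}_{kl}\,y^{i}.
\]

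Finally I would substitute this into the generalized weakly-Weyl equation assumed for $F$, namely $\tilde{W}_{j}{}^{i}{}_{kl\mid 0}+\lambda F\,\tilde{W}_{j}{}^{i}{}_{kl}=\mu_{r}\,\tilde{W}_{j}{}^{r}{}_{kl}\,y^{i}$. Eliminating $\tilde{W}_{j}{}^{i}{}_{kl\mid 0}$ and regrouping gives
\[
\tilde{W}_{j}{}^{i}{}_{kl||0}+(\lambda F+2P)\,\tilde{W}_{j}{}^{i}{}_{kl}=(\mu_{r}+P_{.r})\,\tilde{W}_{j}{}^{r}{}_{kl}\,y^{i},
\]
so that setting $\bar{\lambda}\bar{F}:=\lambda F+2P$ (a $1$-homogeneous function on $TM$, hence $\bar{\lambda}$ is a legitimate scalar) and $\bar{\mu}_{r}:=\mu_{r}+P_{.r}$ exhibits the generalized weakly-Weyl equation for $\bar{F}$. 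This proves closure under projective change, with the shift $\lambda F\mapsto\lambda F+2P$ matching exactly the $\bar{\lambda}F=\lambda F+2P$ of the first theorem and reflecting the common $0$-homogeneity. The only genuinely new labor relative to that theorem is the verification of the three contraction identities; everything else is the same computation carried along with the doubled $P_{.m}$-coefficient.
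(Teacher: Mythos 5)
The paper does not actually prove this theorem; it is imported by citation from \cite{W-Weyl}, so there is no in-paper proof to compare against. Your proposal is nevertheless correct and is exactly the argument one would expect the cited source to contain: it transcribes the paper's own proof of its first theorem (closure of $W$-$GDW$-metrics under projective change), with the two genuinely new ingredients handled properly --- the homogeneity shift from degree $-1$ (for $D_{j}{}^{i}{}_{kl}$) to degree $0$ (for $\tilde{W}_{j}{}^{i}{}_{kl}$), which correctly changes the coefficient of $P_{.m}$ from $-1$ to $-2$ and produces $\bar{\lambda}\bar{F}=\lambda F+2P$, and the three trace identities $\tilde{W}_{j}{}^{i}{}_{kl}y^{j}=\tilde{W}_{j}{}^{i}{}_{kl}y^{k}=\tilde{W}_{j}{}^{i}{}_{kl}y^{l}=0$, whose verification you sketch accurately (the $y^{l}$ contraction does reduce, via the product rule and the antisymmetry of $W^{i}{}_{kl}$ in $(k,l)$, to $W^{i}{}_{kj}+W^{i}{}_{jk}=0$). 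The only point worth flagging is that $W^{i}{}_{k}y^{k}=0$ is not purely ``inherited'' from $R^{i}{}_{k}y^{k}=0$; it also uses the normalizing term $-\tfrac{1}{n+1}A^{m}{}_{k.m}y^{i}$ in the definition of the Weyl tensor, but the identity does hold, so the proof stands.
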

The class of generalized weakly-Weyl Finsler metrics is a projective invariant in Finsler geometry, similar to the class of Weyl metrics. Although Weyl metrics are a subset of this novel class, there exist generalized weakly-Weyl metrics that are not Weyl metrics. In \cite{Example}, the authors introduce several spherically symmetric Finsler metrics that, while not classified as Weyl metrics, are nonetheless weakly-Weyl metrics and also Douglas metrics.
\begin{ex} \cite{Example}
Spherically symmetric Finsler metrics in $\mathbb{B}^{n}(v) \subseteq \mathbb{R}^{n}$, has been introduced as $F=|y| \varphi\left(|x|, \frac{<x, y>}{|y|}\right)$ with $\varphi:[0, v) \times \mathbb{R} \rightarrow \mathbb{R}$ where $(x, y) \in T \mathbb{B}^{n}(v) \backslash\{0\}$ and
$$
\varphi(r, s)=s . h(r)-\frac{s}{\left(a+b r^{2}\right)^{\lambda}} \int_{s_{0}}^{s} \sigma^{-2} f\left(\frac{r^{2}-\sigma^{2}}{\left(a+b r^{2}\right)^{\lambda}}\right) d \sigma,
$$
where $a$, $b$ and $\lambda$ are constants satisfying $a+b r^{2}>0$. These metrics are not Weyl metrics but are weakly-Weyl and also Douglas metrics. Their Weyl curvature is as follows
$$
\begin{gathered}
W{^i}_{k }=\frac{4 \lambda(\lambda-1) b^{2}}{\left(a+b r^{2}\right)^{2}}\left(x_{j} x^{i} \delta_{k l}+\frac{1}{m-1} x_{k} x_{l} \delta^{i}{ }_{j}+\frac{|x|^{2}}{m-1} \delta^{i}{ }_{k} \delta_{j l}-\delta_{j l} x_{k} x^{i}\right. \\
\left.-\frac{|x|^{2}}{m-1} \delta^{i}{ }_{j} \delta_{k l}-\frac{1}{m-1} x_{j} x_{k} \delta^{i}{ }_{l}\right) y^{k} y^{l}=\omega_j{^i}_{kl}(x) y^j y^l.
\end{gathered}
$$
\end{ex}
Then one could concludes
\begin{cor}\label{WeylinGWWeyl}
The class of Weyl Finsler metrics is a proper subset of the class of generalized weakly-Weyl Finsler metrics.
\end{cor}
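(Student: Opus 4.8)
The plan is to prove the corollary by establishing the two assertions packaged in the phrase \emph{proper subset}: first that every Weyl metric is a generalized weakly-Weyl metric, and then that this inclusion is strict. For the containment, I would begin from the defining property of a Weyl metric, namely that its Weyl curvature vanishes, $W^i_k = 0$. Substituting this into the definition of the weakly-Weyl curvature $W_j{^i}_{kl} = \frac{1}{3}(W^i_{k.l} - W^i_{l.k})_{.j}$ gives $W_j{^i}_{kl} = 0$ at once, and consequently $\tilde{W}_j{^i}_{kl} = W_j{^i}_{ml.k}\,y^m = 0$. With $\tilde{W}$ vanishing identically, every horizontal derivative $\tilde{W}_j{^i}_{kl|0}$ also vanishes, so the generalized weakly-Weyl equation
\[
\tilde{W}_j{^i}_{kl|0} + \lambda F \tilde{W}_j{^i}_{kl} = \mu_r \tilde{W}_j{^r}_{kl} y^i
\]
collapses to $0 = 0$ and is satisfied for any choice of $\lambda$ and $\mu_r$. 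This shows that the class of Weyl metrics is contained in the class of generalized weakly-Weyl metrics.

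For strictness I would exhibit a generalized weakly-Weyl metric that fails to be a Weyl metric, and the spherically symmetric metrics of the immediately preceding Example are the natural witnesses. Those metrics are weakly-Weyl, i.e.\ $W_j{^i}_{kl} = 0$, so by the same computation $\tilde{W}_j{^i}_{kl} = W_j{^i}_{ml.k}\,y^m = 0$ and the generalized weakly-Weyl equation again holds trivially; hence they belong to the larger class. On the other hand their Weyl curvature $W^i_k = \omega_j{^i}_{kl}(x)\,y^j y^l$ is nonzero, since the prefactor $4\lambda(\lambda-1)b^2/(a+br^2)^2$ does not vanish for parameters with $b \neq 0$ and $\lambda \neq 0, 1$. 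Thus these metrics lie in the generalized weakly-Weyl class but outside the Weyl class, which makes the inclusion proper.

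The only point requiring any care is the implication that \enquote{weakly-Weyl} (the vanishing of $W_j{^i}_{kl}$) forces the vanishing of $\tilde{W}_j{^i}_{kl}$, but this is immediate from the definition $\tilde{W}_j{^i}_{kl} = W_j{^i}_{ml.k}\,y^m$, because a tensor that is identically zero has vanishing vertical derivatives. I therefore expect no genuine obstacle: both directions reduce to the triviality of the defining equation when $\tilde{W} \equiv 0$, and the substantive content of the corollary, namely the existence of a non-Weyl witness, is already furnished by the cited Example. The proof is thus a brief assembly of the two definitions together with that Example.
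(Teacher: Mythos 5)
Your overall strategy is the same as the paper's: the paper states the corollary as an immediate consequence of the preceding Example, the containment being the trivial observation that a Weyl metric has $W^i{}_k=0$, hence $\tilde{W}_j{}^i{}_{kl}=0$, hence satisfies the generalized weakly-Weyl equation vacuously, and the strictness being witnessed by the spherically symmetric metrics of Liu--Mo. The first half of your argument is correct.

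The flaw is in the second half, in the sentence \enquote{Those metrics are weakly-Weyl, i.e.\ $W_j{}^i{}_{kl}=0$.} That identification is not the paper's definition of weakly-Weyl, and for these particular metrics it is internally inconsistent with your own next claim that $W^i{}_k\neq 0$: since $W^i{}_k$ is $2$-homogeneous and satisfies $W^i{}_k y^k=0$, Euler's theorem gives $W^i{}_{l.k}y^l=-W^i{}_k$ and hence $W_j{}^i{}_{kl}\,y^j y^l=W^i{}_k$, so the vanishing of $W_j{}^i{}_{kl}$ would force $W^i{}_k=0$ and the metric would be a Weyl metric after all --- destroying exactly the strictness you are trying to establish. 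What is actually true for the Example's metrics is that $W^i{}_k=\omega_j{}^i{}_{kl}(x)\,y^j y^l$ is quadratic in $y$ (the \enquote{W-quadratic} case mentioned in the introduction), so $W_j{}^i{}_{kl}$ is a generally nonzero tensor depending on $x$ alone; its vertical derivative therefore vanishes, whence $\tilde{W}_j{}^i{}_{kl}=W_j{}^i{}_{ml.k}\,y^m=0$, and the generalized weakly-Weyl equation holds trivially exactly as you intend. Replacing the false intermediate claim by this quadraticity argument repairs the proof; the conclusion and the choice of witness are otherwise correct.
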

According to Lemma \ref{WeylDThetaLemma} and definition of generalized weakly-Weyl Finsler metrics, one finds
\begin{thm}\label{GWWeylinWGDW}
Every generalized weakly-Weyl Finsler metric is a $W-GDW$-metric.
\end{thm}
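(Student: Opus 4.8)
The plan is to reduce the defining equation of a generalized weakly-Weyl metric directly to the $W-GDW$ equation by invoking the identity of Lemma \ref{WeylDThetaLemma}. The decisive observation is that the weakly-Weyl curvature $\tilde{W}_{j}{}^{i}{}_{kl}=W_{j}{}^{i}{}_{ml.k}y^{m}$ is, by its very definition, the left-hand side of \eqref{WtildandDandTheta}. Since $\tilde{D}_{j}{}^{i}{}_{kl}=D_{j}{}^{i}{}_{kl\mid 0}$, Lemma \ref{WeylDThetaLemma} therefore yields the pointwise algebraic relation
\[
\tilde{W}_{j}{}^{i}{}_{kl}=\tilde{D}_{j}{}^{i}{}_{kl}-\frac{1}{n+1}\theta_{jkl}\,y^{i}.
\]
This single substitution carries essentially all of the content; the remaining steps are bookkeeping.

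First I would insert this expression into the generalized weakly-Weyl equation $\tilde{W}_{j}{}^{i}{}_{kl\mid 0}+\lambda F\,\tilde{W}_{j}{}^{i}{}_{kl}=\mu_{r}\tilde{W}_{j}{}^{r}{}_{kl}y^{i}$. To differentiate the $\theta$-term I would use that the Liouville field is horizontally parallel for the Berwald connection, i.e. $y^{i}{}_{\mid 0}=0$, so that $(\theta_{jkl}\,y^{i})_{\mid 0}=\theta_{jkl\mid 0}\,y^{i}$. On the right-hand side the contraction collapses the $\theta$-contribution to a scalar multiple of $y^{i}$, since $\mu_{r}\theta_{jkl}y^{r}=\mu_{0}\theta_{jkl}$ with $\mu_{0}:=\mu_{r}y^{r}$. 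Expanding both sides then isolates the $W-GDW$ left-hand side $\tilde{D}_{j}{}^{i}{}_{kl\mid 0}+\lambda F\,\tilde{D}_{j}{}^{i}{}_{kl}$, while every term surviving from $\theta$ (namely $\theta_{jkl\mid 0}y^{i}$, $\lambda F\,\theta_{jkl}y^{i}$ and $\mu_{0}\theta_{jkl}y^{i}$), as well as $\mu_{r}\tilde{D}_{j}{}^{r}{}_{kl}y^{i}$, carries an explicit factor of $y^{i}$.

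The final step is to transpose these $y^{i}$-proportional terms and read off the coefficient tensor. Setting
\[
U_{jkl}=\mu_{r}\tilde{D}_{j}{}^{r}{}_{kl}+\frac{1}{n+1}\bigl(\theta_{jkl\mid 0}+\lambda F\,\theta_{jkl}-\mu_{0}\,\theta_{jkl}\bigr),
\]
one recovers exactly $\tilde{D}_{j}{}^{i}{}_{kl\mid 0}+\lambda F\,\tilde{D}_{j}{}^{i}{}_{kl}=U_{jkl}\,y^{i}$, which is the $W-GDW$ equation, and with precisely the same scalar function $\lambda$. I do not expect a genuine obstacle: the only point demanding care is to confirm that every non-$\tilde{D}$ term truly factors through $y^{i}$, which holds because $\theta_{jkl}$ enters only as $\theta_{jkl}y^{i}$ or (after the $\mu_{r}$ contraction) as the scalar $\mu_{0}\theta_{jkl}$ times $y^{i}$. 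Thus the whole difficulty of the theorem is already absorbed into Lemma \ref{WeylDThetaLemma}; granting that identity, the result follows by the direct substitution described above.
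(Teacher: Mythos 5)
Your proposal is correct and follows essentially the same route as the paper: both proofs substitute the identity of Lemma \ref{WeylDThetaLemma} into the generalized weakly-Weyl equation, use that $y^i$ is horizontally parallel for the Berwald connection, and arrive at the same coefficient tensor $U_{jkl}=\mu_{r}\tilde{D}_{j}{}^{r}{}_{kl}+\frac{1}{n+1}\bigl(\theta_{jkl\mid 0}+(\lambda F-\mu_{0})\theta_{jkl}\bigr)$ with the same scalar $\lambda$. Your write-up is in fact slightly more explicit than the paper's about why every non-$\tilde{D}$ term factors through $y^i$.
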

\begin{proof}
Assume that $F$ be a generalized weakly-Weyl Finsler metric. Then
$$
\tilde{W}_{j}{ }^{i}{ }_{k l \mid 0}+ \lambda F \tilde{W}_j{^i}_{kl} = \mu_{r} \tilde{W}_{j}{ }^{r}{ }_{k l} y^{i}.
$$
Based on the Lemma \ref{WeylDThetaLemma}, we have
\[
\tilde{W}_{j}{ }^{i}_{kl|0} + \lambda F \tilde{W}_{j}{ }^{i}_{kl}- \mu_{r} \tilde{W}_{j}^{r}{ }_{k l} y^{i}=D_{j}{ }^{i}{ }_{k l\mid0\mid 0}+ \lambda F D_{j}{ }^{i}{ }_{k l\mid0}
\]
\[
- \mu_{r} D_{j}^{r}{ }_{k l \mid 0} y^{i} -\frac{1}{n+1}\left(\theta_{j k l \mid 0}+(\lambda F-\mu_{0}) \theta_{j k l}\right) y^{i}=0.
\]
Hence, based on the equations above, we obtain
$$
D_{j}{ }^{i}{ }_{k l|0| 0}+\lambda F D_{j}{ }^{i}{ }_{k l \mid 0} =U_{j k l} y^{i}
$$
where $U_{j k l}=\frac{1}{n+1}\left(\theta_{j k l \mid 0}+(\lambda F-\mu_{0}) \theta_{j k l}\right)+ \mu_{r} D_{j}{ }^{r}{ }_{k l \mid 0}$. It means that $F$ is a $W-G D W$-metric.
\end{proof}

\end{document}